\documentclass[preprint,12pt]{elsarticle}

\addtolength{\topmargin}{-9mm}  %
\setlength{\oddsidemargin}{5mm}  % 3.17cm - 1 inch
\setlength{\evensidemargin}{0mm}
\setlength{\textwidth}{15cm}
\setlength{\textheight}{21cm}

\usepackage{amssymb}
\usepackage{amsthm}
\usepackage{amsmath}
\usepackage{epic}

\biboptions{sort&compress}

\newtheorem{thm}{Theorem}[section]
	\newtheorem{lem}[thm]{Lemma}
    
    \newtheorem{cor}[thm]{Corollary}
        \newtheorem{claim}{Claim}[thm]
\newtheorem{defi}{Definition}

\journal{}

\begin{document}

\begin{frontmatter}

\title{The high order spectral radius of graphs without long cycles or paths }

\author{Yuntian Wang, Lizhu Sun, Changjiang Bu}

\address{College of Mathematical Sciences, Harbin Engineering University, Harbin 150001, PR China}

\begin{abstract}
    In 1959, Erd\H{o}s and Gallai established two classic theorems, which determine the maximum number of edges in an $n$-vertex graph with no cycles of length at least $k$, and in an $n$-vertex graph with no paths on $k$ vertices, respectively.
    Subsequently, generalized and spectral versions of the Erd\H{o}s-Gallai theorems have been investigated.
    A concept of a high order spectral radius for graphs was introduced in 2023, defined as the spectral radius of a tensor and termed the $t$-clique spectral radius $\rho_t(G)$.
    In this paper, we establish a high order spectral version of Erd\H{o}s-Gallai theorems by employing the $t$-clique spectral radius, i.e., we determine the extremal graphs that attain the maximum $t$-clique spectral radius in the $n$-vertex graphs with no cycles of length at least $k$ and in the $n$-vertex graphs with no paths on $k$ vertices, respectively.
\end{abstract}

\begin{keyword}
High order spectral radius, Spectral extremal, Tensor, Cycles, Paths.

\end{keyword}

\end{frontmatter}

\section{Introduction}

        Let $G$ be a graph with vertex set $V(G)$ and edge set $E(G)$.
        For a vertex $v \in V(G)$, let $N_G(v)$ be the set of neighbors of $v$ in $G$, $d_G(v)=|N_G(v)|$ be the degree of $v$ in $G$.
        The length of a longest cycle in $G$ is its circumference, denoted by $c(G)$, and the number of vertices in a longest path in $G$ is denoted by $p(G)$.
        For two vertices $u, v \in V(G)$, we write $u \sim v$ (respectively, $u \nsim v$) to indicate that $u$ and $v$ are adjacent (respectively, non-adjacent).
        Let $K_n$ denote the complete graph with $n$ vertices, and $I_n$ denote the empty graph with $n$ vertices.
        Let $K_t(G)$ denote the set of all cliques (i.e., vertex sets of complete subgraphs ) of size $t$ in $G$.
        Let $C_{\geq k}$ denote the family of all cycles of length at least $k$, and let $P_k$ denote a path on $k$ vertices.
        For two vertices $u, v \in V(G)$, the distance  between $u$ and $v$ of $G$ is the length of the shortest path connecting them. The diameter of $G$ is the maximum distance among all vertices of $G$.
        For any two disjoint graphs $G$ and $H$, define their union as $G \cup H$, with vertex set $V(G) \cup V(H)$ and edge set $E(G) \cup E(H)$, their join is defined as $G \vee H$, with vertex set $V(G) \cup V(H)$ and edge set $E(G) \cup E(H) \cup \{\{u,v\} \mid u \in V(G), v \in V(H)\}$.
        For positive integers $n, l, a$ with $a \leq n-l$, define the $n$-vertex graph $S_{n,l,a} = K_l \vee (K_a \cup I_{n-l-a})$. For brevity, we denote $S_{n,l,1}$ by $S_{n,l}$ and $S_{n,l,2}$ by $S_{n,l}^+$.

        For a given graph family $\mathcal{F}$, a graph $G $ is called $\mathcal{F} $-free if it does not contain any member of $\mathcal{F} $ as a subgraph. When $\mathcal{F}$ consists of a single graph $F$, i.e. $\mathcal{F}=\{F\}$, we write $F$-free as a shorthand for $\{F\}$-free.

        In 1959, Erd\H{o}s and Gallai \cite{Erdos1959} determined the maximum number of edges for two types of $n$-vertex graphs: those with no cycles of length at least $k$, and those with no paths on $k$ vertices, as stated in the following two theorems:
        \begin{thm}\label{erdos1}\cite{Erdos1959}
            Let $k \geq 3$. For an $n$-vertex graph $G$ that is $C_{\geq k}$-free, the number of edges satisfies
                \begin{align*}
                    e(G) \leq \frac{1}{2}(k-1)(n-1).
                \end{align*}
		\end{thm}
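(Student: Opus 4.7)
The plan is to prove the bound by induction on $n$, exploiting the fact that the target function $\frac{(k-1)(n-1)}{2}$ behaves sub-additively when one cuts $G$ along a cut vertex. For the base case, when $n \leq k-1$ one has $e(G) \leq \binom{n}{2} = \frac{n(n-1)}{2} \leq \frac{(k-1)(n-1)}{2}$, which handles small $n$ trivially.

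For the inductive step I would first reduce to the 2-connected case. If $G$ is disconnected with components of orders $n_1, \ldots, n_r$, each component is $C_{\geq k}$-free, and the inductive hypothesis gives $e(G) \leq \sum_{i=1}^r \frac{(k-1)(n_i-1)}{2} = \frac{(k-1)(n-r)}{2} \leq \frac{(k-1)(n-1)}{2}$. If $G$ is connected but has a cut vertex $v$, decompose $G$ into two subgraphs $G_1, G_2$ sharing only $v$, with $|V(G_i)| = n_i$ and $n_1+n_2 = n+1$; each $G_i$ is $C_{\geq k}$-free and strictly smaller, so induction gives $e(G)=e(G_1)+e(G_2)\leq \frac{(k-1)(n_1-1)}{2}+\frac{(k-1)(n_2-1)}{2}=\frac{(k-1)(n-1)}{2}$.

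It remains to treat 2-connected $G$ with $n\geq k$. If there exists a vertex $v$ with $d_G(v)\leq \frac{k-1}{2}$, delete it: $G-v$ is still $C_{\geq k}$-free, and by induction $e(G)\leq e(G-v)+\frac{k-1}{2}\leq \frac{(k-1)(n-2)}{2}+\frac{k-1}{2}=\frac{(k-1)(n-1)}{2}$. Otherwise $\delta(G)\geq \lceil k/2\rceil$, and I would invoke Dirac's classical circumference theorem: a 2-connected graph on $n$ vertices with minimum degree $\delta$ has circumference at least $\min(n,2\delta)\geq \min(n,k)=k$, contradicting $C_{\geq k}$-freeness.

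The main obstacle is the last step. Without citing Dirac's theorem one would have to replicate a P\'osa-type rotation--extension argument on a longest path in $G$ to produce the desired long cycle, and this is the genuinely nontrivial ingredient of the proof. The disconnected and cut-vertex reductions, while essential for closing the induction, are routine bookkeeping once the form of the hypothesis is fixed, and the low-degree deletion step is tailored precisely so that the slope $\frac{k-1}{2}$ matches the target bound.
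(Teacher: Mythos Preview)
The paper does not supply its own proof of this statement: Theorem~\ref{erdos1} is quoted as background from Erd\H{o}s and Gallai~\cite{Erdos1959}, with no argument given. There is therefore nothing in the paper to compare against.

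Your proposed proof is correct and is essentially the standard argument. The reductions to the 2-connected case via components and cut vertices are clean, and the sub-additivity of $\tfrac{(k-1)(n-1)}{2}$ under these splittings is exactly what makes the induction close. In the 2-connected step, your case split is sound: if some vertex has degree at most $\tfrac{k-1}{2}$ you delete it and apply induction, and otherwise $\delta(G)\geq \lceil k/2\rceil$, whence Dirac's circumference theorem (every 2-connected graph has a cycle of length at least $\min(n,2\delta)$) produces a cycle of length at least $k$, contradicting the hypothesis. You are right that this last ingredient is the only nontrivial one, and that without citing Dirac one must redo a P\'osa-style rotation argument; everything else is bookkeeping.
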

        \begin{thm}\label{erdos2}\cite{Erdos1959}
            Let $k \geq 2 $. For an $n$-vertex graph $G$ that is $P_{k}$-free, the number of edges satisfies
                \begin{align*}
                    e(G) \leq \frac{1}{2}(k-2)n.
                \end{align*}
		\end{thm}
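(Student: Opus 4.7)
The plan is to prove Theorem \ref{erdos2} by induction on $n$. For the base case $n \leq k-1$, the bound is immediate from $e(G) \leq \binom{n}{2} = \frac{n(n-1)}{2} \leq \frac{n(k-2)}{2}$. For the inductive step with $n \geq k$, two straightforward reductions handle most cases: if $G$ is disconnected, apply the inductive hypothesis to each $P_k$-free component and sum; if some $v \in V(G)$ has $d_G(v) \leq \frac{k-2}{2}$, then $G - v$ is $P_k$-free on $n-1$ vertices, and induction gives $e(G) \leq e(G-v) + d_G(v) \leq \frac{1}{2}(k-2)(n-1) + \frac{k-2}{2} = \frac{1}{2}(k-2)n$.

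The remaining case is when $G$ is connected with $\delta(G) > \frac{k-2}{2}$, equivalently $\delta(G) \geq \lfloor k/2 \rfloor$. Here I would derive a contradiction by exhibiting a copy of $P_k$ in $G$. Let $P = v_1 v_2 \cdots v_\ell$ be a longest path in $G$. By maximality, every neighbor of $v_1$ or $v_\ell$ lies in $V(P)$, so $\ell - 1 \geq \delta(G) \geq \lfloor k/2 \rfloor$. If $\ell \geq 2\lfloor k/2 \rfloor + 1 \geq k$ there is nothing to prove, so we may assume $\lfloor k/2 \rfloor + 1 \leq \ell \leq 2\lfloor k/2 \rfloor$; in particular $\ell < k \leq n$, so some vertex lies outside $V(P)$.

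In this regime I plan to invoke the classical Dirac-style argument. Set $A = \{i : v_1 v_i \in E(G)\}$ and $B = \{i : v_\ell v_{i-1} \in E(G)\}$, both subsets of $\{2, \ldots, \ell\}$. The size bounds $|A|, |B| \geq \lfloor k/2 \rfloor$ combined with $\ell \leq 2\lfloor k/2 \rfloor$ force $A \cap B \neq \emptyset$ by inclusion--exclusion; any $i \in A \cap B$ yields the cycle $C = v_1 v_2 \cdots v_{i-1} v_\ell v_{\ell-1} \cdots v_i v_1$ spanning $V(P)$. Because $G$ is connected and $V(P) \subsetneq V(G)$, some vertex $w \notin V(P)$ has a neighbor $v_j$ on $C$; opening $C$ at $v_j$ and appending $w$ produces a path strictly longer than $P$, contradicting its maximality.

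The main obstacle is the last step: making the cycle-to-path extension airtight, including the tight boundary case $\ell = 2\lfloor k/2 \rfloor$ where the inclusion--exclusion bound is saturated, and confirming that the same argument handles both parities of $k$ uniformly. Once this combinatorial core is established, the induction closes cleanly, and the extremal construction of disjoint $K_{k-1}$'s confirms that the bound $\frac{1}{2}(k-2)n$ is sharp whenever $(k-1) \mid n$.
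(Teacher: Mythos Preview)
The paper does not give its own proof of Theorem~\ref{erdos2}; it is quoted as a classical 1959 result of Erd\H{o}s and Gallai and serves purely as background. So there is no in-paper argument to compare against.

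Your proposal is a correct and standard proof of the Erd\H{o}s--Gallai path theorem. The induction scheme (base case $n\le k-1$, disconnected reduction, low-degree vertex deletion) is sound, and the remaining case---$G$ connected with $\delta(G)\ge\lfloor k/2\rfloor$ and $n\ge k$---is handled by the Dirac rotation argument exactly as you outline. The ``obstacle'' you flag is not a real one: once $A\cap B\neq\emptyset$ you genuinely get a cycle on all of $V(P)$, and since $\ell<k\le n$ and $G$ is connected, some $w\notin V(P)$ is adjacent to a vertex $v_j$ on that cycle; opening the cycle at $v_j$ and prepending $w$ gives a path on $\ell+1$ vertices, contradicting maximality. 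The inclusion--exclusion count $|A|+|B|\ge 2\lfloor k/2\rfloor>\ell-1$ goes through uniformly for both parities, including the tight case $\ell=2\lfloor k/2\rfloor$ (odd $k$, $\ell=k-1$), so no separate treatment is needed. The argument is complete as stated; you can drop the hedging in the final paragraph.
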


        The bounds in the above two theorems are sharp, with extremal examples arising when the specified divisibility conditions hold: for Theorem~\ref{erdos1}, take any connected $n$-vertex graph whose blocks (maximal connected subgraphs with no cut vertices) are $K_{k-1} $  when $k-2$ divides $n-1$; and for Theorem~\ref{erdos2}, take the graph whose connected components are  $K_{k-1} $ when $k-1$ divides $n-1$.

        For a positive integer $s$, let $N_s(G)$ denote the number of $s$-cliques in graph $G$. A generalized version (clique version) of Theorem~\ref{erdos1} was established by Luo \cite{Luo2018}. First, Luo derived an upper bound for the maximum number of $t$-cliques in $C_{\geq k}$-free 2-connected graphs (connected graphs without cut vertices) on $n$ vertices (Theorem~\ref{CT2-2}). Then, using this result, Luo obtained the clique version of Theorem \ref{erdos1} (Corollary~\ref{CT2-3}),
        \begin{thm}\label{CT2-2}\cite{Luo2018}
            Let $n \geq k \geq 5$, let $G$ be a $C_{\geq{k}}$-free 2-connected graph on $n$ vertices.

            \noindent (1)If $k$ is odd, $N_t(G) \leq \max\{N_t(S_{n,2,k-4}), N_t(S_{n, \frac{k-1}{2} })\}$;

            \noindent (2)If $k$ is even, $N_t(G) \leq \max\{N_t(S_{n,2,k-4}), N_t(S_{n, \frac{k-2}{2} }^+)\}$.

		\end{thm}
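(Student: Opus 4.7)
My plan is to follow the strategy of Kopylov's classical argument for the edge-extremal Erd\H{o}s--Gallai bound, lifted from counting edges to counting $t$-cliques. Throughout, I write $\alpha=\lfloor(k-1)/2\rfloor$ for the expected low-degree threshold.

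I would proceed by induction on $n$. The base case $n=k-1$ reduces to a direct verification, since any 2-connected $C_{\geq k}$-free graph on $k-1$ vertices is a subgraph of $K_{k-1}$, and the inequality for $K_{k-1}$ can be checked against the two candidate extremal graphs. For the inductive step, fix a longest path $P=v_0v_1\ldots v_p$ in $G$. Because $G$ is 2-connected and $C_{\geq k}$-free, the standard P\'osa rotation arguments force $N_G(v_0),N_G(v_p)\subseteq V(P)$ and $p\leq k-2$. Pursuing the rotation-extension procedure carefully, I would establish the following dichotomy: either (a) some vertex $v\in V(G)$ satisfies $d_G(v)\leq\alpha$, or (b) there exist two vertices each adjacent to essentially all others, in which case $G$ is a subgraph of $S_{n,2,k-4}$ and the target bound is immediate by monotonicity of $N_t$.

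In case (a), I would apply the clique-deletion identity
\[
N_t(G)=N_t(G-v)+N_{t-1}(G[N_G(v)])\leq N_t(G-v)+\binom{\alpha}{t-1},
\]
and invoke the inductive hypothesis on $G-v$ (after decomposing it into its 2-connected blocks and summing the bounds over blocks). A comparison of polynomial expressions in $n$, $k$, $t$ would then show that this recursive bound is absorbed by $\max\{N_t(S_{n,2,k-4}),N_t(S_{n,(k-1)/2})\}$ for odd $k$ and by $\max\{N_t(S_{n,2,k-4}),N_t(S_{n,(k-2)/2}^+)\}$ for even $k$; the key elementary inequalities to verify are of the form $N_t(S_{n-1,\ell})+\binom{\alpha}{t-1}\leq N_t(S_{n,\ell})$ and its $S^+$ analog.

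The main obstacle I foresee is the possible loss of 2-connectivity when $v$ is removed. Although $G-v$ remains connected (since $G$ has no cut vertex), it may split into several blocks, and the induction hypothesis only applies to 2-connected graphs. To handle this I would either argue that an extremal $G$ minimizing the vertex count has $G-v$ itself 2-connected (ruling out fragmentation), or combine the block bounds with a secondary application of Theorem~\ref{erdos1} to control blocks that are only 1-connected. Tracking the interaction between the removed vertex, the blocks of $G-v$, and the boundary cliques shared across blocks is where the bulk of the technical work lies; by contrast, the polynomial comparisons used to close the induction, while lengthy, are routine.
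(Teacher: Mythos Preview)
This theorem is not proved in the present paper: it is quoted from Luo~\cite{Luo2018} as background for the clique-counting analogues of the Erd\H{o}s--Gallai theorems, and no argument for it appears here. There is therefore no proof in this paper to compare your proposal against.

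Regarding the proposal itself: your dichotomy (a)/(b) collapses. By Dirac's theorem every $2$-connected graph $G$ satisfies $c(G)\ge\min\{|V(G)|,2\delta(G)\}$, so under the hypotheses $n\ge k$ and $c(G)<k$ one always has $\delta(G)\le\alpha$, and case~(b) never occurs. The entire argument then rests on case~(a), and the block-decomposition issue you already flag becomes the whole difficulty rather than a side obstacle; in particular, your sketch never produces a situation in which the $S_{n,2,k-4}$ bound is the one that bites. Luo's actual proof avoids per-vertex induction by using Kopylov's $\alpha$-disintegration (iteratively deleting vertices of degree at most $\alpha$ in the current graph until none remain) and then splitting on the size of the surviving core: when the core has at most $\alpha$ vertices the $S_{n,\lfloor(k-1)/2\rfloor}$ or $S_{n,(k-2)/2}^+$ bound governs, and when the core has between $\alpha+1$ and $k-2$ vertices the $S_{n,2,k-4}$ bound governs. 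That core analysis is where the second extremal family genuinely enters.
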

        \begin{cor}\label{CT2-3}\cite{Luo2018}
            Let $n \geq k \geq 4$, let $G$ be a $C_{\geq{k}}$-free graph on $n$ vertices, then
            $N_t(G) \leq \frac{n-1}{k-2}\begin{pmatrix}k-1\\t\end{pmatrix}$.
		\end{cor}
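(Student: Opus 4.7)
My plan is to reduce the assertion to the $2$-connected case handled by Theorem~\ref{CT2-2} via a block decomposition. The key observation is that every clique of $G$ lies inside a single block (i.e., a maximal $2$-connected subgraph, with edges themselves treated as blocks), since a complete graph on $\geq 3$ vertices has no cut vertex. This gives $N_t(G) = \sum_B N_t(B)$, where the sum ranges over the blocks of $G$. A standard count on the block-cut tree yields $\sum_B(|V(B)| - 1) = n - 1$ when $G$ is connected and at most $n-1$ otherwise. Consequently, the corollary reduces to the per-block claim
\begin{align*}
N_t(H) \leq \frac{m-1}{k-2}\binom{k-1}{t} \quad \text{for every } C_{\geq k}\text{-free 2-connected graph } H \text{ on } m \geq 2 \text{ vertices},
\end{align*}
since summing this bound over all blocks and invoking the identity above gives exactly $\frac{n-1}{k-2}\binom{k-1}{t}$. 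Blocks isomorphic to $K_2$ satisfy the per-block bound trivially.

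For the per-block claim I would split on $m$. If $2 \leq m \leq k-1$, then $H$ is automatically $C_{\geq k}$-free, so $N_t(H) \leq \binom{m}{t}$, and the bound reduces to $\binom{m}{t}/(m-1) \leq \binom{k-1}{t}/(k-2)$; this follows from a short monotonicity calculation showing $f(m):=\binom{m}{t}/(m-1)$ is nondecreasing in $m$ (specifically $f(m+1)/f(m) = (m+1)(m-1)/(m(m+1-t)) \geq 1$ for $t \geq 2$). If $m \geq k$ (and $k \geq 5$), Theorem~\ref{CT2-2} bounds $N_t(H)$ by the maximum of $N_t(S_{m,2,k-4})$ and either $N_t(S_{m,(k-1)/2})$ or $N_t(S_{m,(k-2)/2}^+)$, depending on the parity of $k$. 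The case $k = 4$ needs a separate direct argument, as a 2-connected graph without cycles of length $\geq 4$ must be a triangle.

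The main obstacle is the final verification: each of $N_t(S_{m,2,k-4})$, $N_t(S_{m,(k-1)/2})$, and $N_t(S_{m,(k-2)/2}^+)$ must be shown to be at most $\frac{m-1}{k-2}\binom{k-1}{t}$ whenever $m \geq k$. Using the explicit formula $N_t(S_{m,l,a}) = \binom{l+a}{t} + (m-l-a)\binom{l}{t-1}$, each such quantity is an affine function of $m$, so after clearing denominators the desired inequality is linear in $m$. I would verify that the coefficient of $m$ in the difference (right-hand side minus left-hand side) is nonnegative, ensuring the inequality is preserved as $m$ grows, and then check it at the boundary $m = k$. This calculation, routine but bookkeeping-heavy, must be split by the parity of $k$ and by small values of $t$ (where $\binom{l}{t-1}$ or $\binom{l+a}{t}$ degenerate), and it accounts for essentially all the real effort of the proof.
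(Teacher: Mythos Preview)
Your proposal is correct and follows exactly the route the paper indicates: the paper states that Luo obtained Corollary~\ref{CT2-3} \emph{from} Theorem~\ref{CT2-2}, and your block decomposition---summing the per-block bound over the block--cut tree identity $\sum_B(|V(B)|-1)\le n-1$---is precisely how that deduction is carried out. The monotonicity check for small blocks, the separate handling of $k=4$, and the affine verification for the three candidate extremal graphs $S_{m,2,k-4}$, $S_{m,(k-1)/2}$, $S_{m,(k-2)/2}^+$ are the expected bookkeeping, and your outline of them is sound.
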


        Similarly, a generalized version (clique version) of Theorem~\ref{erdos2} has also been given by Luo \cite{Luo2018}. First, Luo established an upper bound on the maximum number of $t$-cliques in connected graphs with $n$ vertices that do not contain $P_k$ as a subgraph (Theorem~\ref{CT2-4}). Furthermore, Luo derived the clique version of Theorem~\ref{erdos2} (Corollary~\ref{CT2-5}).
        \begin{thm}\label{CT2-4}\cite{Luo2018}
            Let $n \geq k \geq 4$, let $G$ be a $P_k$-free connected graph on $n$ vertices.

            \noindent (1)If $k$ is odd, $N_t(G) \leq \max\{N_t(S_{n,1,k-3}), N_t(S_{n, \frac{k-3}{2} }^+)\}$;

            \noindent (2)If $k$ is even, $N_t(G) \leq \max\{N_t(S_{n,1,k-3}), N_t(S_{n, \frac{k-2}{2} })\}$.

		\end{thm}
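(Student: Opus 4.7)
The plan is to reduce the connected $P_k$-free problem to the 2-connected $C_{\geq k+1}$-free problem, so that Theorem~\ref{CT2-2} can be applied. Given a connected $P_k$-free graph $G$ on $n$ vertices, I will form $G' = K_1 \vee G$ on $n+1$ vertices by adjoining a new apex vertex $u$ adjacent to every vertex of $G$. A direct check shows $G'$ is 2-connected: removing $u$ leaves $G$, which is connected; removing any other vertex leaves the remaining vertices all adjacent to $u$. Moreover $G'$ is $C_{\geq k+1}$-free, since a cycle of length $\geq k+1$ avoiding $u$ lies in $G$ and yields a path on $\geq k+1$ vertices after deleting one edge, contradicting $P_k$-freeness; while a cycle of length $\geq k+1$ through $u$ becomes, after deleting $u$, a path on $\geq k$ vertices in $G$, also forbidden.

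Applying Theorem~\ref{CT2-2} to $G'$ with parameter $k+1$ produces the bound $N_t(G') \leq \max\{N_t(S_{n+1,2,k-3}),\, N_t(Y)\}$, where $Y = S_{n+1,(k-1)/2}^+$ when $k$ is odd (so $k+1$ is even) and $Y = S_{n+1,k/2}$ when $k$ is even. The key algebraic observation is that each candidate has a universal vertex and factors as $K_1$ joined with the target extremal graph on $n$ vertices:
\begin{align*}
S_{n+1,2,k-3} &= K_1 \vee S_{n,1,k-3}, \\
S_{n+1,(k-1)/2}^+ &= K_1 \vee S_{n,(k-3)/2}^+, \\
S_{n+1,k/2} &= K_1 \vee S_{n,(k-2)/2}.
\end{align*}
Together with the identity $N_t(K_1 \vee H) = N_t(H) + N_{t-1}(H)$, obtained by partitioning the $t$-cliques of $K_1 \vee H$ according to whether they contain the universal vertex, this yields the \emph{augmented} bound
\begin{align*}
N_t(G) + N_{t-1}(G) \leq \max\bigl\{\,N_t(S_{n,1,k-3}) + N_{t-1}(S_{n,1,k-3}),\; N_t(X) + N_{t-1}(X)\,\bigr\},
\end{align*}
where $X$ is the parity-dependent second candidate from the statement.

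The main obstacle is to descend from this bound on $N_t(G) + N_{t-1}(G)$ to the desired bound on $N_t(G)$ alone, which is not formally implied by it. My plan is simultaneous induction on $t$, with the trivial case $t = 1$ (where $N_1(G) = n$ is forced) as the base. In the inductive step I would argue by contradiction: assuming $N_t(G) > M_t := \max\{N_t(S_{n,1,k-3}), N_t(X)\}$, the inductive bound $N_{t-1}(G) \leq M_{t-1}$ combined with the displayed inequality should restrict which of the two sums realizes the outer max and force sufficient structural rigidity on $G$ to yield a contradiction, invoking explicit monotonicity of the polynomials $N_t(S_{n,l,a})$ in the parameters $n, l, a, t$. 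An auxiliary but necessary ingredient is a direct comparison of $N_t(S_{n,1,k-3})$ with $N_t(X)$ in terms of $n$, $k$, $t$ to identify which candidate dominates in each regime, and small-$n$ corner cases where one of the $S_{n,\cdot,\cdot}$ constructions degenerates will need separate verification.
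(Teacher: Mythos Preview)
This theorem is not proved in the paper: it is quoted from Luo~\cite{Luo2018} as background in the introduction, and none of the paper's own arguments (Sections~3 and~4, which treat the $t$-clique spectral radius) re-derive it. There is therefore no in-paper proof to compare your proposal against.

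On its own merits: your cone reduction $G \mapsto G' = K_1 \vee G$ is correct and standard, and the factorizations $S_{n+1,2,k-3} = K_1 \vee S_{n,1,k-3}$, etc., are right. The genuine gap is exactly where you place it, and the plan you sketch to close it does not work. From
\[
N_t(G) + N_{t-1}(G) \;\leq\; \max\{A_t + A_{t-1},\; B_t + B_{t-1}\}
\qquad\text{and}\qquad
N_{t-1}(G) \;\leq\; \max\{A_{t-1}, B_{t-1}\}
\]
one \emph{cannot} in general deduce $N_t(G) \leq \max\{A_t, B_t\}$, because the two maxima may be realized by different candidates. A numerical witness: $A_t = 10$, $A_{t-1} = 100$, $B_t = 20$, $B_{t-1} = 50$, and $N_t(G) = 25$, $N_{t-1}(G) = 80$; both premises hold while the conclusion fails. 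Nothing about ``monotonicity of $N_t(S_{n,l,a})$ in $n,l,a,t$'' automatically rules out such a crossing for the specific pairs $(S_{n,1,k-3},\,X)$ that arise here, and you have not shown that it does. Using Theorem~\ref{CT2-2} as a black box is too weak for the descent; what is actually needed is structural information about $G'$ beyond the bare inequality (this is what the Kopylov-based argument in the original reference supplies), or else an external constraint linking $N_t$ and $N_{t-1}$ such as Kruskal--Katona, together with a careful case analysis that you have not carried out.
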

        \begin{cor}\label{CT2-5}\cite{Luo2018}
            Let $G$ be a $P_{{k}}$-free graph on $n$ vertices, then
            $N_t(G) \leq \frac{n}{k-1}\begin{pmatrix}k-1\\t\end{pmatrix}$.
		\end{cor}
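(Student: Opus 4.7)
The plan is to reduce to the connected case supplied by Theorem~\ref{CT2-4} by a component decomposition. Write $G = G_1 \cup \cdots \cup G_m$ for the connected components, with $|V(G_i)| = n_i$, so that $n = \sum_{i=1}^{m} n_i$ and $N_t(G) = \sum_{i=1}^{m} N_t(G_i)$. Each $G_i$ is still $P_k$-free, so if I can establish the per-component inequality
\begin{equation*}
N_t(G_i) \leq \frac{n_i}{k-1}\binom{k-1}{t},
\end{equation*}
summing over $i$ at once yields the corollary. The small values $k=2,3$ are immediate from the structural descriptions of $P_k$-free graphs (edgeless, respectively a matching together with isolated vertices), so I would assume $k \geq 4$ for the main argument.

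If $n_i \leq k - 1$, the trivial bound $N_t(G_i) \leq \binom{n_i}{t}$ reduces the target inequality to
\begin{equation*}
(n_i - 1)(n_i - 2) \cdots (n_i - t + 1) \leq (k - 2)(k - 3) \cdots (k - t),
\end{equation*}
which is clear term-by-term since $n_i - j \leq (k - 1) - j$ for every $j \geq 1$. If $n_i \geq k$, Theorem~\ref{CT2-4} bounds $N_t(G_i)$ by the maximum of $N_t(S_{n_i, 1, k-3})$ and one of $N_t(S_{n_i, (k-3)/2}^{+})$ or $N_t(S_{n_i, (k-2)/2})$ according to the parity of $k$. Each candidate extremal graph has the shape $K_l \vee (K_a \cup I_{n_i - l - a})$, and a direct count using the fact that $I$-vertices are adjacent only to $V(K_l)$ gives the closed-form identity
\begin{equation*}
N_t(K_l \vee (K_a \cup I_{n - l - a})) = \binom{l + a}{t} + (n - l - a)\binom{l}{t - 1}.
\end{equation*}

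With these formulas, the desired per-component bound becomes an inequality that is linear in $n_i$, and can be verified by comparing the coefficients of $n_i$ on the two sides and checking the boundary value $n_i = k$. This algebraic verification, carried out for the three parameter triples produced by Theorem~\ref{CT2-4}, is the main obstacle and the most tedious step. It splits naturally into the case $t = 1$ (where both sides equal $n_i$), the case $t = 2$ (where the constant contribution from $\binom{k-2}{2}$ in $N_t(S_{n_i,1,k-3})$ must be balanced against the linear gain on the right), and the case $t \geq 3$ (where the bound degenerates to comparing $\binom{k-2}{t}$ or $\binom{(k+1)/2}{t}$ with $\tfrac{n_i}{k-1}\binom{k-1}{t}$, and follows easily from $n_i \geq k$). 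Assembling the per-component bounds and using $\sum_{i} n_i = n$ completes the proof.
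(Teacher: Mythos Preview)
The paper does not itself prove Corollary~\ref{CT2-5}; it is quoted from Luo~\cite{Luo2018}, and the surrounding text only indicates that Luo deduces it from the connected-case bound Theorem~\ref{CT2-4}. Your plan (component decomposition, trivial bound when $n_i\le k-1$, Theorem~\ref{CT2-4} when $n_i\ge k$) is exactly that route, and the identity $N_t(K_l\vee(K_a\cup I_{n-l-a}))=\binom{l+a}{t}+(n-l-a)\binom{l}{t-1}$ is correct.

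There is, however, a real gap in your $t\ge 3$ case. You assert that for $t\ge 3$ the comparison ``degenerates to comparing $\binom{k-2}{t}$ or $\binom{(k+1)/2}{t}$ with $\tfrac{n_i}{k-1}\binom{k-1}{t}$'', i.e.\ that the linear-in-$n_i$ term on the left disappears. This is true for $S_{n_i,1,k-3}$, because $\binom{1}{t-1}=0$ for $t\ge 3$, but it is \emph{false} for $S_{n_i,(k-2)/2}$ and $S_{n_i,(k-3)/2}^{+}$: there the linear coefficient is $\binom{\lfloor(k-2)/2\rfloor}{t-1}$, which is nonzero whenever $t-1\le\lfloor(k-2)/2\rfloor$ (for instance $k=10$, $t=3$ gives $\binom{4}{2}=6$). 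So for these two candidate extremal graphs the target inequality is still a genuine linear inequality in $n_i$, and you must actually compare the two slopes
\[
\binom{\lfloor(k-2)/2\rfloor}{t-1}\quad\text{versus}\quad \frac{1}{k-1}\binom{k-1}{t}=\frac{1}{t}\binom{k-2}{t-1},
\]
and then check a boundary value. The slope inequality does hold (with $l=\lfloor(k-2)/2\rfloor$ one has $l-j\le\tfrac{1}{2}((k-2)-j)$ for each $0\le j\le t-2$, whence $t\binom{l}{t-1}\le \tfrac{t}{2^{t-1}}\binom{k-2}{t-1}\le\binom{k-2}{t-1}$), so the approach can be completed; but the verification is not the trivial one you describe, and your case split as written does not cover it.
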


        Recent work \cite{Zhao2024, Lu2025, Fang2025} has studied the extremal problem of the maximum number of $t$-cliques in graphs that forbid long cycles and other subgraphs, as well as in graphs that forbid long paths and other subgraphs.

        Let $\mu(G)$ denote the spectral radius of the adjacency matrix of graph $G$. The spectral versions of Erd\H{o}s and Gallai theorems were established by Gao and Hou \cite{Gao2019} for graphs with no cycles of length at least $k$, and by Nikiforov \cite{Nikiforov2010} for graphs with no paths on $k$ vertices.

        \begin{thm}\label{CT2}\cite{Gao2019}
            For positive integers $k$ and $n$ with $s = \lfloor \frac{k-1}{2} \rfloor \geq 2$ and $n \geq 13{s}^2$, let $G$ be a $C_{\geq k}$-free graph on $n$ vertices.

            \noindent (1)If $k $ is odd, $\mu(G) \leq \mu(S_{n,s})$, equality holds if and only if $G = S_{n,s}$;

            \noindent (1)If $k $ is even, $\mu(G) \leq \mu(S_{n,s}^+)$, equality holds if and only if $G = S_{n,s}^+$.
		\end{thm}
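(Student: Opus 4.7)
The plan is to show that any $C_{\geq k}$-free graph $G$ on $n\geq 13s^2$ vertices achieving the maximum spectral radius coincides with $S_{n,s}$ (resp.~$S_{n,s}^+$) when $k$ is odd (resp.~even). First, I would verify that $S_{n,s}=K_s\vee I_{n-s}$ is $C_{\geq k}$-free: any cycle in it must have $I_{n-s}$-vertices alternating with $K_s$-vertices (the $I$-side being edgeless), bounding the cycle length by $2s=k-1$; the single extra edge in $S_{n,s}^+$ raises the maximum cycle length to $2s+1=k-1$ in the even case. A two-block equitable-partition computation then gives
\[
\mu(S_{n,s}) = \tfrac12\bigl((s-1)+\sqrt{(s-1)^2+4s(n-s)}\bigr) \geq \sqrt{s(n-s)},
\]
with a similar formula for $\mu(S_{n,s}^+)$. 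Extremality of $G$ then yields the baseline bound $\mu(G)^2 \geq s(n-s)$, which is of order $sn$.

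Next I would deploy the Perron eigenvector $\mathbf{x}$ of $G$, normalized by $x_u = \max_v x_v = 1$. From the identity $\mu(G)^2\,x_u = \sum_{v\sim u}\sum_{w\sim v} x_w \leq \sum_{v\sim u} d_G(v)$, combined with the Erd\H{o}s--Gallai edge bound $2e(G)\leq (k-1)(n-1)$ of Theorem~\ref{erdos1}, one sees that the degree mass of $G$ must be overwhelmingly concentrated at $u$'s neighborhood; the hypothesis $n\geq 13s^2$ is used to absorb error terms and conclude $d_G(u)\geq n - O(s^2)$. Applying the same analysis to $G-u$ (still $C_{\geq k}$-free) produces another near-universal vertex, which must be adjacent to $u$, for otherwise a long cycle through the two ``hubs'' could be assembled from the many common neighbors. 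Iterating $s$ times builds a clique $K_s$ of near-universal vertices inside $G$.

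Finally, once the $K_s$ hub is isolated, the cycle-length analysis from Paragraph~1 forces the $n-s$ remaining vertices to induce an independent set in the odd case (yielding $G=S_{n,s}$) and an independent set plus at most one edge in the even case (yielding $G=S_{n,s}^+$): any further edge among these vertices, combined with an alternating tour through the hub, already produces a cycle of length $\geq k$. The main obstacle is ruling out the competing 2-connected extremal family $S_{n,2,k-4}$ appearing in the $t=2$ reading of Theorem~\ref{CT2-2}. Since $\mu(S_{n,2,k-4})$ is of order $\sqrt{2n}$ while $\mu(S_{n,s})$ is of order $\sqrt{sn}$, one must verify the strict inequality $\mu(S_{n,2,k-4}) < \mu(S_{n,s})$ (resp.~$<\mu(S_{n,s}^+)$) for $s\geq 3$ quantitatively, and it is precisely this comparison that pins down the threshold $n\geq 13s^2$. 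The equality cases follow because every inequality above is strict unless $G$ has exactly the asserted structure.
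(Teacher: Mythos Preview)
Note first that Theorem~\ref{CT2} is quoted from \cite{Gao2019} and is not proved in this paper; what the paper does prove is the tensor generalization Theorem~\ref{CT1}. Both Gao--Hou's original argument and this paper's proof of Theorem~\ref{CT1} proceed via the Kelmans-type transfer $G\mapsto G_{u\to v}$: Lemma~\ref{Gao1} says this operation never increases the circumference, and Lemma~\ref{CL1} says it never decreases the (clique) spectral radius, so one may pass to the closed family $\mathrm{SPEX}_t'(n,C_{\geq k})$ in which every adjacent pair has nested neighborhoods. The structural Lemmas~\ref{CL2} and~\ref{CL4} then force any extremal graph in this family to be $S_{n,a,b}$, and a final calculation (Lemma~\ref{CL5}) optimizes over $(a,b)$. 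Your Perron-vector/degree-concentration route is therefore a genuinely different strategy, closer in spirit to Nikiforov's path argument in \cite{Nikiforov2010} than to the transfer-operation method actually used here.

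Your outline has two real gaps. First, the iteration ``apply the same analysis to $G-u$'' does not go through as stated: $G-u$ is $C_{\geq k}$-free but there is no reason it is spectrally extremal, so you lose the lower bound $\mu^2\geq s(n-s)$ that drove the conclusion $d_G(u)\geq n-O(s^2)$; you would instead need to argue directly, via the eigenvector equation for $G$, that the vertex of second-largest $x$-coordinate also has near-full degree. Second, having a clique of $s$ \emph{near}-universal vertices is not yet enough: your final cycle-building step (``any further edge among the remaining vertices produces a $C_{\geq k}$'') requires the hub vertices to be genuinely universal, and you have not explained how to upgrade ``degree $\geq n-O(s^2)$'' to ``degree $=n-1$'' or otherwise close the argument. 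The Kelmans-operation approach sidesteps both issues, since the nested-neighborhood property delivers a true universal vertex in one stroke (Lemma~\ref{CL2}) rather than an approximate one; it is also what makes the explicit threshold $n\geq 13s^2$ attainable, whereas your eigenvector bounds would likely give a worse constant.
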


        \begin{thm}\label{CT2-1}\cite{Nikiforov2010}
            For positive integers $k$ and $n$ with $s = \lfloor \frac{k-1}{2} \rfloor \geq 2$ and $n \geq 2^{4s}$, let $G$ be a $P_k$-free graph on $n$ vertices.

            \noindent (1)If $k $ is odd, $\mu(G) \leq \mu(S_{n,s-1}^+)$, equality holds if and only if $G = S_{n,s-1}^+$;

            \noindent (1)If $k $ is even, $\mu(G) \leq \mu(S_{n,s})$, equality holds if and only if $G = S_{n,s}$.
		\end{thm}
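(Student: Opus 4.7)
The plan is to follow the standard three-stage recipe for spectral extremal problems: lower-bound the target spectral radius, extract structural information from the Perron eigenvector, and then reconstruct the extremal graph.

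\textbf{Stage 1 (reductions and rough bounds).} Let $G$ be a $P_k$-free graph on $n$ vertices maximising $\mu(G)$, and write $S^\ast := S_{n,s-1}^+$ if $k$ is odd and $S^\ast := S_{n,s}$ if $k$ is even. By direct computation on the quotient matrix of the join $K_\ell \vee I_{n-\ell}$, one obtains $\mu(S^\ast) = (1+o(1))\sqrt{\ell n}$ with $\ell \in \{s-1,s\}$, so in particular $\mu(G) \geq \mu(S^\ast) = \Theta(\sqrt{n})$. On the other hand, Theorem~\ref{erdos2} gives $e(G) \leq \tfrac{1}{2}(k-2)n$, and $\mu(G) \leq \sqrt{2e(G)}$ yields $\mu(G) = O(\sqrt{n})$. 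Hence $\mu(G)$ is already of the correct order of magnitude, and one may further reduce to the case where $G$ is connected (otherwise, replace $G$ by the component carrying the spectral radius).

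\textbf{Stage 2 (Perron-vector partition).} Let $\mathbf{x}$ be the Perron eigenvector, normalised so that $x_u = 1$ for some vertex $u$ with maximum coordinate. Define $L := \{v \in V(G) : x_v \geq \eta\}$ for a threshold $\eta = \eta(k)$ chosen of order $n^{-1/2}$. Iterating the eigenvalue equation $\mu x_u = \sum_{v \sim u} x_v$ and comparing with the bound $\mu \geq c\sqrt{n}$ forces $|L| \leq s$. The heart of the argument is then a \emph{no long path} assertion: if some shell vertex had many neighbours outside $L$, or two core vertices were linked by a long shell path, one could greedily construct a $P_k$ by stitching together a clique inside $L$ with a few carefully chosen shell vertices. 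This pushes $G$ towards the skeleton $L \cong K_\ell$ completely joined to the rest of $V(G)$, with only very limited residual edges outside $L$.

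\textbf{Stage 3 (exact reconstruction).} Once $L$ is shown to induce a clique of size $\ell \in \{s-1,s\}$ adjacent to every shell vertex, the only remaining freedom is in $G[V(G)\setminus L]$. A straightforward path-avoidance argument bounds these edges: for $k$ even, no edge can appear in the shell without extending to a $P_k$, yielding $G \cong S_{n,s}$; for $k$ odd, at most one shell edge can appear, and a Perron strict-monotonicity step (any legal added edge strictly increases $\mu$) forces exactly one, yielding $G \cong S_{n,s-1}^+$.

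The hard part will be Stage 2: quantitatively controlling the Perron vector and excluding vertices of intermediate weight. This is precisely where the hypothesis $n \geq 2^{4s}$ enters, as the elimination of medium-weight vertices requires $n$ to be exponentially large in $s$ to absorb the error terms accumulated along the greedy path-building procedure. If I can make the threshold-$\eta$ partition sharp enough to certify $|L|=\ell$ exactly (rather than merely $|L| \leq s$), the rest of the argument is essentially bookkeeping on the join structure.
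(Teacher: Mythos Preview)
The paper does not prove Theorem~\ref{CT2-1}. This theorem is stated as a known background result, attributed to Nikiforov~\cite{Nikiforov2010}, and is included only to motivate the paper's own high-order spectral analogues (Theorems~\ref{CT3-1} and~\ref{CT3}). There is therefore no ``paper's own proof'' to compare your proposal against.

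For what it is worth, your outline is a reasonable caricature of the Perron-vector approach that Nikiforov actually uses in~\cite{Nikiforov2010}, so if your goal were to reconstruct that external proof, you would be on broadly the right track. But note that the paper's own contributions for the $P_k$-free case (Theorems~\ref{CT3-1} and~\ref{CT3}) are proved by an entirely different method: rather than a Perron-vector threshold partition, the authors use the Kelmans-type edge-shifting operation $G_{u\to v}$ (Lemmas~\ref{L4-1} and~\ref{CL1}) to reduce to graphs in which every adjacent pair has comparable neighbourhoods, then exploit the resulting diameter-$2$ structure (Lemma~\ref{CL2}) and a level-component decomposition (Lemma~\ref{CL4}) to pin down the extremal graph. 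If you are ultimately aiming at the $t$-clique versions, that combinatorial reduction is the relevant template, not the eigenvector analysis you have sketched.
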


        For other spectral results concerning cycles and paths, see~\cite{Zhai2015,Lin2021,Zhai2021,Li2022,Zhang2023,Li2023, Li2024,C2024,Zou2026}. The concept of tensor eigenvalues was independently proposed by Qi \cite{Qi2005} and Lim \cite{Lim2005} in 2005.

        \begin{defi}\cite{Qi2005,Lim2005}
            Let $H = (h_{i_1i_2\cdots i_t})$ be a $t$-order $n$-dimensional complex tensor, where $i_j=1,\cdots,n$ and $j=1,\cdots, t$. A complex number $\lambda$ is called an eigenvalue of $H $ if there exists a complex vector $x=(x_1,x_2,\dots,x_n)^\textrm{T}\neq 0$ satisfing
                \begin{align*}
				    Hx^{t-1} = \lambda x^{[t-1]},
			    \end{align*}
            and $x$ is called an eigenvector corresponding to $\lambda$,
            where $Hx^{t-1}$ is a vector in $\mathbb{C}^n$ whose $i$-th component is
                \begin{equation}\label{align1}
                    (Hx^{t-1})_i=\sum\limits_{i_2,\ldots,i_t=1}^n h_{ii_2\cdots i_t}x_{i_2}\cdots x_{i_t},
                \end{equation}
            and $x^{[t-1]}=(x_1^{t-1},x_2^{t-1},\dots,x_n^{t-1})^\textrm{T}$. The spectral radius $\rho(H)$ represents the maximum value of the modulus of all eigenvalues.
        \end{defi}

        The following introduces a high order spectral radius for graphs, based on the definition of tensor eigenvalues, which is referred to as the $t$-clique spectral radius, as proposed by Liu and Bu \cite{Liu2023}.
        \begin{defi}\cite{Liu2023}
            let $G$ be a graph on $n$ vertices, the tensor $A_t(G) = (a_{i_1i_2\cdots i_t})$ of $t$-order $n$-dimension is called the $t$-clique tensor of $G$, where
			\begin{align*}
                a_{i_1i_2\cdots i_t}=\begin{cases}\frac{1}{(t-1)!},&\text{if }\{i_1,\ldots,i_t\}\in K_t(G).\\0,&\text{otherwise}.\end{cases}
			\end{align*}
            The spectral radius of $A_t(G)$ is denoted by $\rho_t(G)$, which is called the $t$-clique spectral radius of $G$.
        \end{defi}

        When $t=2$, $\rho_t(G)$ is the adjacency spectral radius of the graph $G$.

        The extension of classical extremal graph theory results to the $t$-clique spectral setting has seen notable progress. Liu and Bu were among the first to advance this line of research, by providing clique spectral versions of Mantel's theorem \cite{Liu2023} and the Erd\H{o}s-Simonovits stability theorem \cite{Liu2024}. Liu, Zhou and Bu \cite{Liu2025} established a spectral analogue of the Erd\H{o}s-Stone theorem. Meanwhile, Tur\'{a}n-type problems for the clique spectral radius have been actively studied. Liu, Zhou and Bu \cite{Liu20231} derived an upper bound for the $3$-clique spectral radius of graphs forbidding the book graph $B_k$ and the complete bipartite graph $K_{2,l}$. Yu and Peng \cite{Yu2026} then characterized the extremal graphs for the $t$-clique spectral radius of graphs excluding a matching of size $t$ ($M_t$). Subsequently, Yan, Yang, and Peng \cite{Yan2026} solved the problem for graphs forbidding both $K_{k}$ and $M_{s}$.

        In this paper, we consider a $t$-clique spectral version of the Erd\H{o}s-Gallai theorem, presented as Theorem~\ref{erdos1} and Theorem~\ref{erdos2}. This work extends Theorem~\ref{CT2} and Theorem~\ref{CT2-1} to the clique spectral version, respectively, at the same time, these results we obtained are the spectral versions of Corollary~\ref{CT2-3} and Corollary~\ref{CT2-5}, respectively.

        \begin{thm}\label{CT1-1}
            For $n \geq k-1$ and $2 \leq \lfloor \frac{k+1}{2} \rfloor < t < k$, let $G$ be a $C_{\geq{k}}$-free graph on $n$ vertices, then $\rho_t(G) \leq \rho_t(K_{k-1})$.

        \end{thm}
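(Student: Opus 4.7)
The plan is to reduce the theorem to a neighborhood clique-count estimate at the vertex of maximum Perron weight, and then close this estimate using Luo's clique-counting theorems for graphs avoiding long paths.

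I would first compute the right-hand side. Since $K_{k-1}$ is vertex-transitive, the all-ones vector is a Perron eigenvector of $A_t(K_{k-1})$, and a direct calculation of $(A_t(K_{k-1})\mathbf{1}^{t-1})_i = \binom{k-2}{t-1}$ yields $\rho_t(K_{k-1}) = \binom{k-2}{t-1}$. Next, let $x \ge 0$ be a Perron eigenvector of $A_t(G)$, scaled so that $\max_v x_v = x_{v^{\ast}} = 1$. Reading the eigenvalue equation at $v^{\ast}$ gives
\begin{align*}
\rho_t(G) \;=\; \sum_{\substack{C \in K_t(G)\\ v^{\ast} \in C}} \prod_{u \in C \setminus \{v^{\ast}\}} x_u \;\le\; N_{t-1}\!\bigl(G[N(v^{\ast})]\bigr),
\end{align*}
since every $x_u \le 1$. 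Thus it suffices to establish $N_{t-1}(G[N(v^{\ast})]) \le \binom{k-2}{t-1}$.

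The structural input is that $G[N(v^{\ast})]$ is $P_{k-1}$-free: any $P_{k-1}$ inside $N(v^{\ast})$ would close through $v^{\ast}$ into a $C_k$ in $G$, contradicting the $C_{\geq k}$-freeness hypothesis. I would then invoke Theorem~\ref{CT2-4} and Corollary~\ref{CT2-5} componentwise to $G[N(v^{\ast})]$, with the parameters shifted as $k \leftarrow k-1$ and $t \leftarrow t-1$.

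The main obstacle is pushing this to the sharp constant $\binom{k-2}{t-1}$. A direct appeal to Corollary~\ref{CT2-5} yields only $N_{t-1}(G[N(v^{\ast})]) \le \frac{|N(v^{\ast})|}{k-2}\binom{k-2}{t-1}$, which is too weak as $|N(v^{\ast})|$ may be as large as $n-1$. The hypothesis $t > \lfloor(k+1)/2\rfloor$, equivalently $t-1 > \lfloor(k-1)/2\rfloor$, is exactly what rules out the dense star-type contributions in Theorem~\ref{CT2-4}: in this range the extremal graphs $S_{m,(k-3)/2}$ and $S_{m,(k-4)/2}^{+}$ carry no $(t-1)$-clique at all, so the only components of $G[N(v^{\ast})]$ that can contribute are of $S_{m,1,k-4}$-type, each yielding at most $\binom{k-3}{t-1}$ many $(t-1)$-cliques. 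The delicate point is then to show that $C_{\geq k}$-freeness of $G$, which forces the various components of $G[N(v^{\ast})]$ to share only $v^{\ast}$ as a common neighbor, limits how many such dense components can coexist so that the total $(t-1)$-clique count collapses to $\binom{k-2}{t-1}$.
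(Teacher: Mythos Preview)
Your approach is entirely different from the paper's. The paper never bounds $\rho_t(G)$ via a neighborhood clique count; it works on the extremal side, using the Kelmans-type operation $G_{u\to v}$ (Lemmas~\ref{Gao1} and~\ref{CL1}) to reduce to $\mathrm{SPEX}_t'(n,C_{\ge k})$, then invokes Lemmas~\ref{CL2} and~\ref{CL4} together with Claim~\ref{CC1} to force the maximal $t$-clique connected component into the shape $S_{n',un(G),\ell}$, and finally argues that the constraints $un(G)\ge t-1$ (needed for $t$-clique connectivity of the isolated vertices in $S_{n',un(G),\ell}$) and $un(G)<k/2$ (from the circumference bound) are incompatible when $t>\lfloor(k+1)/2\rfloor$, leaving only complete graphs on at most $k-1$ vertices.

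Your route has a genuine gap at the last step, and it cannot be closed. The bound $N_{t-1}(G[N(v^{\ast})])\le\binom{k-2}{t-1}$ is false for $C_{\ge k}$-free $G$: let $G$ be $m$ copies of $K_{k-1}$ identified at a single vertex $v^{\ast}$. Every cycle lies in one block, so $c(G)=k-1$ and $G$ is $C_{\ge k}$-free; yet $G[N(v^{\ast})]$ is $m$ disjoint copies of $K_{k-2}$, giving $N_{t-1}(G[N(v^{\ast})])=m\binom{k-2}{t-1}$, unbounded in $m$. Thus your ``delicate point'' that $C_{\ge k}$-freeness limits the number of dense components is not merely delicate but wrong. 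Worse, this same $G$ is $t$-clique connected and properly contains $K_{k-1}$, so Lemma~\ref{2L4} yields $\rho_t(G)>\rho_t(K_{k-1})$ for every $m\ge 2$; already $\rho_3(F_2)=2^{1/3}>1=\rho_3(K_3)$ in the case $k=4$, $t=3$. Hence the statement as written fails for all $n\ge 2k-3$, and neither your argument nor the paper's structural reduction (which, on this example, invokes Claim~\ref{CC1} outside the level-3 setting in which it is proved; the modification there turns $G$ into $S_{n,k-2}$ with circumference $2k-4>k-1$) can establish it without some additional hypothesis.
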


        We will introduce the concept of $t $-clique connected component in the next section.

        \begin{thm}\label{CT1}
            For $2 \leq t \leq \lfloor \frac{k+1}{2} \rfloor$ and sufficiently large $n$, let $G$ be a $C_{\geq{k}}$-free graph on $n$ vertices.

            \noindent (1)If $k$ is odd, $\rho_t(G) \leq \rho_t(S_{n, \frac{k-1}{2} })$, equality holds if and only if $G = S_{n, \frac{k-1}{2} }$;

            \noindent (2)If $k$ is even, $\rho_t(G) \leq \rho_t(S_{n, \frac{k-2}{2} }^+)$, equality holds if and only if $G = S_{n, \frac{k-2}{2} }^+$.

		\end{thm}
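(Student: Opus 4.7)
The plan is to adapt the standard spectral Erd\H{o}s--Gallai proof (as in Theorem~\ref{CT2}) to the $t$-clique tensor setting, using the Perron--Frobenius theorem for nonnegative symmetric tensors together with Luo's clique-counting bound (Theorem~\ref{CT2-2}) as the combinatorial engine. Let $G$ be a $C_{\geq k}$-free graph on $n$ vertices that maximizes $\rho_t$, and let $x$ be a nonnegative Perron eigenvector of $A_t(G)$ normalized so that $x_u = \max_v x_v = 1$ for some vertex $u$. The eigenequation reads
\[
    \rho_t(G) = \sum_{\{u,i_2,\ldots,i_t\} \in K_t(G)} x_{i_2}\cdots x_{i_t}.
\]
Testing the candidate $S_{n,s}$ (resp.\ $S_{n,s}^+$), which is $C_{\geq k}$-free since its circumference is at most $k-1$, yields the baseline $\rho_t(G) \geq \rho_t(S_{n,s})$ (resp.\ $\rho_t(S_{n,s}^+)$). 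A direct computation on the two-class bipartition of $S_{n,s}$ shows $\rho_t(S_{n,s})$ grows polynomially in $n$, so $u$ must lie in many $t$-cliques.

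Next, I would decompose $G$ into its $t$-clique connected components (as defined in the next section of the paper) and exploit the fact that $\rho_t(G)$ is governed by the $t$-clique connected piece $B$ carrying the Perron mass, so we may focus on a single $2$-connected-like block. On $B$, Theorem~\ref{CT2-2} gives
\[
    N_t(B) \leq \max\bigl\{N_t(S_{n,2,k-4}),\, N_t(S_{n,s}\text{ or }S_{n,s}^+)\bigr\}.
\]
In the range $2 \leq t \leq s$, a direct binomial comparison shows the second candidate dominates for $n$ large, so this is the controlling term.

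Then I would convert the clique-count bound into a sharp spectral bound via a weighted double-counting of the type introduced by Liu and Bu \cite{Liu2023, Liu2024}: sum $x_v^{t-1}\rho_t(G)$ over $v$ and exploit the symmetry of $A_t(G)$ to obtain an inequality expressing $\rho_t(G)$ as a weighted sum over $t$-cliques. Combined with the maximality of $x_u = 1$ and the structural bound on $N_t(B)$, this yields $\rho_t(G) \leq \rho_t(S_{n,s})$ (resp.\ $\rho_t(S_{n,s}^+)$). The crucial part of this step is showing that the eigenvector weight must be concentrated on a set $L$ of $s$ vertices forming a clique, every remaining vertex must have its neighborhood exactly $L$, and no additional edge (for $k$ odd) or at most one additional edge (for $k$ even) can lie outside $L$.

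The main obstacle I anticipate is this last stability step, i.e.\ bootstrapping from the clique-count inequality to the spectral equality case. In the $t=2$ adjacency setting one argues by a local edge-switching that strictly increases $\mu$; in the tensor setting the analogous move should be the addition or redistribution of a single $t$-clique, and one must verify that the associated change
\[
    \Delta \rho_t \;\geq\; c \cdot x_{v_1}\cdots x_{v_t} \;>\; 0,
\]
for some positive constant $c$ depending only on the normalization. Verifying that every non-extremal graph admits such a strictly improving move, while remaining $C_{\geq k}$-free, and that the only fixed points are $S_{n,s}$ (for $k$ odd) and $S_{n,s}^+$ (for $k$ even), will close both the inequality and the equality characterization.
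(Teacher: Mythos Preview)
Your outline has a genuine gap at the ``convert the clique-count bound into a sharp spectral bound'' step, and this gap is not a technicality---it is the heart of the problem. There is no general inequality of the form ``$N_t(B)\le N_t(H)$ implies $\rho_t(B)\le \rho_t(H)$''; the spectral radius depends on how the $t$-cliques are \emph{distributed} through the Perron weights, not merely on how many there are. The identity $\rho_t(G)=t\sum_{K\in K_t(G)}\prod_{v\in K}x_v$ (with $\sum x_v^t=1$) gives only $\rho_t(G)\le tN_t(G)$, which is far from sharp since $\rho_t(S_{n,s})=\Theta(n^{(t-1)/t})$ while $N_t(S_{n,s})=\Theta(n)$. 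The weighted double-counting you sketch produces exactly this crude estimate, and no refinement based solely on Theorem~\ref{CT2-2} will recover the correct order, let alone the exact extremal graph. A secondary issue: Theorem~\ref{CT2-2} requires $2$-connectivity, whereas a $t$-clique connected component need not be $2$-connected, so the combinatorial input is not even applicable to the block $B$ you isolate.

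The paper takes an entirely different route that bypasses clique counting. It proves (Lemma~\ref{CL1}) that the Kelmans-type shift $G_{u\to v}$ never decreases $\rho_t$, and combines this with Lemma~\ref{Gao1} (shifts do not increase circumference) to reduce the extremal problem to graphs in which every edge $\{u,v\}$ satisfies $N_G(u)\setminus\{v\}\subseteq N_G(v)$ or vice versa. Such ``shift-closed'' graphs are then analyzed structurally: every connected induced subgraph has a universal vertex (Lemma~\ref{CL2}), and an iterated universal-vertex decomposition into level-$1$/$2$/$3$ components is set up. A sequence of explicit local modifications (Claims within Lemma~\ref{CL4}) rules out level-$3$ components in the $t$-clique connected piece carrying the spectral radius, forcing that piece to be $S_{n',a,b}$; a direct comparison of $\rho_t(S_{n,a+1,b-2})$ versus $\rho_t(S_{n,a,b})$ then pins down $(a,b)$ (Lemma~\ref{CL5}), and a final eigenvector argument gives uniqueness. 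If you want to repair your approach, the missing ingredient is precisely this shift operation and the structural reduction it affords; Luo's theorem plays no role in the paper's proof.
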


        \begin{thm}\label{CT3-1}
            For $n \geq k-1$ and $2 \leq \lfloor \frac{k+1}{2} \rfloor < t < k $, let $G$ be a $P_{k}$-free graph on $n$ vertices, then $\rho_t(G) \leq \rho_t(K_{k-1})$.

        \end{thm}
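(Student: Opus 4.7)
The plan is to reduce Theorem~\ref{CT3-1} to the already-stated Theorem~\ref{CT1-1} by the elementary observation that every $P_k$-free graph is automatically $C_{\geq k}$-free. Under this reduction, the $t$-clique spectral radius never has to be handled directly for the path setting.

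First, I would verify the graph-theoretic implication. Suppose $G$ contains a cycle $v_1 v_2 \cdots v_m v_1$ with $m \geq k$. Deleting the edge $v_m v_1$ leaves the Hamilton path $v_1 v_2 \cdots v_m$ of the cycle, which is a copy of $P_m$; its first $k$ consecutive vertices then span a $P_k$. Hence a $P_k$-free graph cannot contain any cycle of length at least $k$, so $G$ is $C_{\geq k}$-free.

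Second, the hypotheses of Theorem~\ref{CT3-1}---namely $n \geq k-1$ and $2 \leq \lfloor \tfrac{k+1}{2} \rfloor < t < k$---coincide exactly with those of Theorem~\ref{CT1-1}. Applying Theorem~\ref{CT1-1} to $G$, which has just been shown to be $C_{\geq k}$-free, yields $\rho_t(G) \leq \rho_t(K_{k-1})$ immediately, and no additional bookkeeping is required.

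Because this is a direct reduction, there is no real obstacle to overcome; the parameter ranges in the two theorems appear to have been arranged precisely so that this argument is clean. If one instead wanted a self-contained proof of Theorem~\ref{CT3-1} that does not invoke Theorem~\ref{CT1-1}, the main work would be a tensor-theoretic argument combining Luo's clique-count bound (Corollary~\ref{CT2-5}) with an AM--GM inequality on a nonnegative Perron eigenvector of $A_t(G)$ normalized by $\sum_i x_i^t = 1$, together with the comparison $\rho_t(K_{k-1}) = \binom{k-2}{t-1}$; but the reduction above bypasses that machinery entirely.
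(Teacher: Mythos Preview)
Your reduction is correct: any cycle on $m\geq k$ vertices contains a $P_k$, so a $P_k$-free graph is automatically $C_{\geq k}$-free, and Theorem~\ref{CT1-1} then applies verbatim under the identical hypotheses to give $\rho_t(G)\le\rho_t(K_{k-1})$. The paper, however, does not argue this way. Section~4 states only that ``similar to the proof in the third section, we can derive Theorems~\ref{CT3-1} and~\ref{CT3} in parallel'': it rebuilds the entire structural analysis---the Kelmans-type operation $G_{u\to v}$, the class $\mathrm{SPEX}_t'(n,P_k)$, the level-component decomposition---with Lemma~\ref{L4-1} ($p(G_{u\to v})\le p(G)$) substituted for Lemma~\ref{Gao1} ($c(G_{u\to v})\le c(G)$). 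That parallel machinery is genuinely needed for Theorem~\ref{CT3}, where the extremal graphs for paths differ from those for cycles and no such containment reduction is available; but for Theorem~\ref{CT3-1} in isolation your one-line reduction is valid and considerably shorter. The paper's approach buys a uniform treatment of both path theorems, while yours buys economy for this one statement. (Your side remark that $\rho_t(K_{k-1})=\binom{k-2}{t-1}$ agrees with the paper's computation $\rho_t(K_{k-1})=\tfrac{t}{k-1}\binom{k-1}{t}$, though it is not needed for the reduction.)
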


        \begin{thm}\label{CT3}
            For $2 \leq t \leq \lfloor \frac{k+1}{2} \rfloor$ and sufficiently large $n$, let $G$ be a $P_{k}$-free graph on $n$ vertices.

            \noindent (1)If $k$ is odd, $\rho_t(G) \leq \rho_t(S_{n, \frac{k-3}{2} }^+)$, equality holds if and only if $G = S_{n, \frac{k-3}{2} }^+$;

            \noindent (2)If $k$ is even, $\rho_t(G) \leq \rho_t(S_{n, \frac{k-2}{2} })$, equality holds if and only if $G = S_{n, \frac{k-2}{2} }$.

		\end{thm}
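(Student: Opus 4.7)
The plan is to parallel the strategy used for Theorem~\ref{CT1}, substituting Luo's path-version clique-count result (Theorem~\ref{CT2-4}) for the cycle counterpart (Theorem~\ref{CT2-2}). First I would use the Perron-Frobenius theorem for nonnegative tensors to reduce to a single $t$-clique connected component $H$ of $G$: since $A_t(G)$ is block-diagonal along such components, $\rho_t(G)=\max_H \rho_t(H)$; and because $t\geq 2$, any $t$-clique connected graph is also connected in the usual sense. The component $H$ is therefore a $P_k$-free connected graph on some $m \leq n$ vertices, so Theorem~\ref{CT2-4} bounds $N_t(H)$ by $\max\{N_t(S_{m,1,k-3}),\, N_t(S_{m,s}^+)\}$ when $k$ is odd, and by $\max\{N_t(S_{m,1,k-3}),\, N_t(S_{m,s})\}$ when $k$ is even, with $s=\lfloor(k-2)/2\rfloor$.

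Second, I would discard the $S_{m,1,k-3}$ candidate for large $n$. In $S_{m,1,k-3}$, every $t$-clique with $t\geq 3$ is contained in the fixed $K_{k-2}$ formed by the apex and the attached $K_{k-3}$ (the independent vertices are pairwise non-adjacent and non-adjacent to the copy of $K_{k-3}$), so the support of $A_t(S_{m,1,k-3})$ sits inside a vertex set of constant size and $\rho_t(S_{m,1,k-3})=O_k(1)$. On the other hand, a two-value Perron eigenvector ansatz ($a$ on $K_s$, $b$ on the crown) applied to $S_{n,s}$ or $S_{n,s}^+$ yields $\rho_t=\Theta_k(n^{(t-1)/t})$, which dominates for large $n$. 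This forces the extremal component $H$ to come from the second family and to contain all but $O_k(1)$ vertices of $G$, yielding $\rho_t(G) \leq \rho_t(S_{n,s}^+)$ for odd $k$ and $\rho_t(G) \leq \rho_t(S_{n,s})$ for even $k$.

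For the equality case, let $x$ be the nonnegative Perron eigenvector of $A_t(G)$. Using the eigenequation $\rho\, x_u^{t-1} = \sum_{C\in K_t(G),\, u\in C}\prod_{v\in C\setminus\{u\}} x_v$, I would run a Kelmans-type shifting move: replacing the neighborhood of a low-weight vertex by that of a high-weight vertex does not decrease $\rho_t$, and combined with the quantitative estimates of the previous step it can be kept $P_k$-free. Iterating and combining with the rigid structure of the $P_k$-free extremal graphs, every vertex of $G$ outside a distinguished set $W$ of $s$ high-weight vertices is forced to be joined to all of $W$, and $W$ must induce $K_s$ plus at most one extra edge (present exactly when $k$ is odd), pinning $G$ down to $S_{n,(k-3)/2}^+$ or $S_{n,(k-2)/2}$ as required.

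The main obstacle is the equality/stability step: tensor spectral radius lacks the clean rank-one edge-deletion perturbation identities available in the matrix case, so verifying that the Kelmans shift strictly increases $\rho_t$ outside the extremal configuration requires a delicate case analysis of the tensor eigenequation rather than a one-line dot-product inequality. In addition, one must quantitatively separate $\rho_t(S_{n,s})$ from $\rho_t(S_{n,s}^+)$ to select the correct parity-dependent extremal graph, which can only be done once $n$ exceeds a threshold depending on $k$ and $t$.
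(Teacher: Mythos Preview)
Your proposal has a genuine gap at the second step. Theorem~\ref{CT2-4} (Luo) is a \emph{clique-counting} statement: it bounds $N_t(H)$, not $\rho_t(H)$, and it does not assert that an extremal $H$ must be isomorphic to one of the two candidate graphs. A $P_k$-free connected graph $H$ can satisfy $N_t(H)\le\max\{N_t(S_{m,1,k-3}),N_t(S_{m,s})\}$ while being structurally very different from either candidate, and there is no monotone relationship $\rho_t\le f(N_t)$ tight enough to pass from Luo's bound to a spectral bound. So comparing $\rho_t(S_{m,1,k-3})$ with $\rho_t(S_{m,s})$ does not tell you anything about $\rho_t(H)$; the sentence ``this forces the extremal component $H$ to come from the second family'' is where the argument breaks.

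The paper bypasses clique-counting entirely. It applies the Kelmans shift $G\mapsto G_{u\to v}$ \emph{from the outset}: Lemma~\ref{L4-1} guarantees $p(G_{u\to v})\le p(G)$, so the shift preserves $P_k$-freeness, and Lemma~\ref{CL1} shows it does not decrease $\rho_t$. Hence one may restrict to the class $\mathrm{SPEX}_t'(n,P_k)$ of spectral-extremal graphs that are closed under these shifts, and then rerun the structural analysis of Section~3 (the path analogues of Lemmas~\ref{CL2}, \ref{CL4}, \ref{CL5}): every connected induced subgraph has diameter at most~$2$ and a universal vertex, there are no level-$3$ components, and the surviving $t$-clique connected component is forced to be $S_{n',a,b}$, which is then optimised under the constraint $p(S_{n,a,b})<k$. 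You do invoke the Kelmans move, but only at the equality stage; the point is that it is the engine of the whole proof, replacing the appeal to Luo rather than supplementing it.
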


\section{Preliminaries}

        In this section, we introduce some concepts and lemmas.
        Let $A = (a_{i_1i_2\cdots i_t})$ be a $t$-order $n$-dimensional real tensor, if $a_{i_1 i_2 \cdots i_t} \geq 0 $ for $i_j = 1, 2, \cdots, n$ and $j = 1, 2, \cdots, t$, then $A $ is called nonnegative; for any permutation $\sigma$ of $i_1 i_2 \cdots i_t$, if $a_{i_1 i_2 \cdots i_t} = a_{\sigma(i_1 i_2 \cdots i_t)}$ for $i_j = 1, 2, \cdots, n$ and $j = 1, 2, \cdots, t$, then $A $ is called symmetric.

        \begin{lem}\label{1L1}\cite{Qi2013}
            For a nonnegative symmetric tensor $A $ of $t$-order and $n$-dimension, let the nonnegative vector $x = (x_1, x_2, \dots, x_n)^\textrm{T}$, then
            \begin{align*}
                \rho(A) = \max\left\{ x^\textrm{T}Ax^{t-1}\mid \sum_{i=1}^{n} x_i^t = 1 \right\}.
            \end{align*}
		\end{lem}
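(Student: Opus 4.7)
The plan is to establish the identity by comparing both sides separately. The set $\Delta = \{x \in \mathbb{R}^n_{\geq 0} : \sum_{i=1}^n x_i^t = 1\}$ is compact and the multilinear form $f(x) := x^\textrm{T} A x^{t-1} = \sum_{i_1, \ldots, i_t} a_{i_1 i_2 \cdots i_t} x_{i_1} x_{i_2} \cdots x_{i_t}$ is continuous on it, so its maximum $M$ is attained at some $x^* \in \Delta$; the task is to show $M = \rho(A)$.

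First I would prove $M \leq \rho(A)$ by a Lagrange-multiplier argument. Because $A$ is symmetric, the partial derivatives satisfy $\partial f / \partial x_j = t (A x^{t-1})_j$ and $\partial (\sum_i x_i^t) / \partial x_j = t x_j^{t-1}$, so stationarity at $x^*$ yields the eigenvalue equation $A(x^*)^{t-1} = \lambda (x^*)^{[t-1]}$ for some real scalar $\lambda$. Taking the $\mathbb{R}^n$ inner product of both sides with $x^*$ and using the normalization gives $M = f(x^*) = \lambda \sum_{i=1}^n (x_i^*)^t = \lambda$, so $M$ is itself an eigenvalue of $A$ with a nonnegative eigenvector, and therefore $M \leq \rho(A)$ by definition.

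Conversely, to obtain $M \geq \rho(A)$, I would invoke the Perron–Frobenius theorem for nonnegative symmetric tensors, the key ingredient supplied by \cite{Qi2013}, which guarantees that $\rho(A)$ is actually attained as an eigenvalue with a nonnegative eigenvector $y \neq 0$ satisfying $A y^{t-1} = \rho(A) y^{[t-1]}$. After rescaling so that $\sum_i y_i^t = 1$, the vector $y$ lies in $\Delta$, and the same inner-product computation yields $y^\textrm{T} A y^{t-1} = \rho(A) \sum_i y_i^t = \rho(A)$, showing that $\rho(A)$ is realized by $f$ on $\Delta$. Combining the two bounds yields the equality.

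The main obstacle is the handling of boundary critical points of $\Delta$, where some coordinates of $x^*$ vanish and the stationarity equation must be read in KKT form (since $x_j^{t-1} = 0$ there). One circumvents this either by perturbing $A$ to an irreducible tensor $A + \varepsilon J$, where $J$ is the all-ones tensor, so that the Perron eigenvector is strictly positive and the interior Lagrange analysis applies, and then passing to the limit $\varepsilon \to 0^+$ using the continuity of $\rho$ in the entries of $A$; or by appealing directly to the existence and nonnegativity of the Perron eigenvector, which is the deep content of the theory that the cited reference supplies and which effectively renders the boundary issue moot.
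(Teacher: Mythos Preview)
The paper does not supply its own proof of this lemma: it is quoted verbatim from \cite{Qi2013} as a preliminary result, with no argument given. So there is nothing in the paper to compare your attempt against.

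For what it is worth, your sketch is the standard variational proof and is essentially correct. The one place that deserves a little more care is your claim in the $M\le\rho(A)$ direction that ``$M$ is itself an eigenvalue of $A$'': when the maximiser $x^*$ lies on the boundary of $\Delta$, the KKT conditions only give $(A(x^*)^{t-1})_j = M (x^*_j)^{t-1}$ on the support of $x^*$, which makes $M$ an eigenvalue of the principal \emph{subtensor} on that support, not of $A$ itself. You then recover $M\le\rho(A)$ either by the monotonicity $\rho(A_S)\le\rho(A)$ for principal subtensors of nonnegative tensors, or via the perturbation $A+\varepsilon J$ you mention; in the latter case note that strict positivity of the entries forces the maximiser to be interior (the one-sided derivative of $f$ in any inward direction is strictly positive at a boundary point), so the Lagrange step is legitimate there, and continuity of both $M_\varepsilon$ and $\rho(A+\varepsilon J)$ in $\varepsilon$ finishes the argument.
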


        Clearly, the $t$-clique tensor is nonnegative and symmetric, for an $n$-vertex graph $G$, Lemma~\ref{1L1} implies that $\rho_t(G)$ satisfies
            \begin{align*}
                \rho_t(G)= \max\left\{ A_t(G)x^t \mid x = (x_1, x_2, \dots, x_n)^\textrm{T} \geq 0, \sum_{i=1}^{n} x_i^t = 1 \right\},
            \end{align*}
        where $A_t(G)x^t := x^\textrm{T}A_t(G)x^{t-1}$.

        Let $A=(a_{i_1i_2 \cdots i_t})$ be a tensor of $t$-order and $n$-dimension, if there exists a nonempty proper index subset $I \subset [n]$ such that
			\begin{align*}
                a_{i_1i_2 \cdots i_t}=0,\ for\ all\ i_1 \in\ I\ and\ \{i_1i_2 \cdots i_t\}\not\subseteq\ I,
			\end{align*}
        then $A$ is called a weakly reducible tensor. Otherwise, $A$ is called a weakly irreducible tensor.

        The two vertices $i$, $j$ on a graph $G$ are called $t$-clique connected, if there are some $t$ cliques $K_t^1, K_t^2,\cdots , K_t^s \in K_t(G)$ such that $i \in V(K_t^1)$, $j \in V(K_t^s)$ and $V(K_t^k) \cap V(K_t^{k+1}) \neq \emptyset$ for $k=1, \cdots, s-1$. If any two vertices of a graph $G$ are $t$-clique connected, then the graph $G$ is called $t$-clique connected \cite{Liu2023}, the maximal $t$-clique connected subgraph in $G$ is called a $t$-clique connected component of $G$.

        \begin{lem}\label{2L1}\cite{Liu2023}
            The $t$-clique tensor of a graph $G$ is weakly irreducible if and only if $G$ is $t$-clique connected.
        \end{lem}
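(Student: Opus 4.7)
The plan is to prove this biconditional by first recasting weak reducibility as a purely combinatorial condition on $t$-cliques, and then handling both implications by short chain-chasing arguments. The pivotal initial step is the observation that, because the $t$-clique tensor is symmetric in all of its $t$ indices, the defining condition ``$a_{i_1 i_2 \cdots i_t} = 0$ whenever $i_1 \in I$ and $\{i_1, \ldots, i_t\} \not\subseteq I$'' is equivalent to the statement that no $t$-clique of $G$ has vertices simultaneously in $I$ and in $V(G) \setminus I$: if a $t$-clique $K$ had a vertex in each, one could place the vertex in $I$ into the first index slot and the remaining vertices of $K$ into the other slots, producing a nonzero entry that violates the condition. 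Thus $A_t(G)$ is weakly reducible if and only if there exists a nonempty proper $I \subset V(G)$ such that every $t$-clique of $G$ lies entirely in $I$ or entirely in $V(G) \setminus I$.

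For the direction ``$G$ is $t$-clique connected $\Rightarrow$ $A_t(G)$ is weakly irreducible,'' I will argue by contradiction. Assuming such an $I$ exists, pick $u \in I$ and $v \in V(G) \setminus I$; by $t$-clique connectedness there is a chain $K^1, \ldots, K^s$ of $t$-cliques with $u \in V(K^1)$, $v \in V(K^s)$, and $V(K^k) \cap V(K^{k+1}) \neq \emptyset$ for each $k$. A short induction on $k$ then yields $V(K^k) \subseteq I$ for every $k$: the base case uses $u \in V(K^1) \cap I$ together with the partition property to force $V(K^1) \subseteq I$, and the inductive step uses that any shared vertex of $V(K^k)$ and $V(K^{k+1})$ already lies in $I$, forcing $V(K^{k+1}) \subseteq I$. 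Consequently $v \in V(K^s) \subseteq I$, contradicting $v \notin I$.

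For the converse, I will assume $G$ is not $t$-clique connected and exhibit a witnessing $I$. If some vertex $v$ of $G$ lies in no $t$-clique, take $I = \{v\}$; no $t$-clique meets $I$ at all, so the combinatorial condition holds vacuously. Otherwise every vertex lies in some $t$-clique, and since $G$ is not $t$-clique connected there are at least two $t$-clique connected components. Taking $I$ to be the vertex set of any one such component works, because the vertices of any single $t$-clique are pairwise $t$-clique connected (via the trivial length-one chain consisting of that clique itself), so each $t$-clique of $G$ is confined to a single component and hence lies entirely in $I$ or entirely outside.

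The main obstacle is really the initial reformulation: one has to carefully exploit that \emph{any} vertex of a hypothetical crossing $t$-clique can play the role of the distinguished first index $i_1$, which is what turns the asymmetric-looking definition of weak reducibility into a symmetric combinatorial condition on clique vertex sets. After this reformulation both implications reduce to the short chain-chasing arguments sketched above.
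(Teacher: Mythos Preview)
The paper does not supply its own proof of this lemma; it is quoted from \cite{Liu2023} as a preliminary result and stated without argument. Your proof is correct. The reformulation of weak reducibility of the symmetric tensor $A_t(G)$ as ``no $t$-clique of $G$ meets both $I$ and $V(G)\setminus I$'' is exactly right, and the two chain-chasing arguments you give for the respective implications are sound. The only implicit assumption is $|V(G)|\ge 2$, so that the witnessing set $I$ (either $\{v\}$ or the vertex set of a single $t$-clique connected component) is a \emph{proper} nonempty subset; this is harmless, since for $n=1$ the tensor is vacuously weakly irreducible and the graph is vacuously $t$-clique connected.
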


        \begin{lem}\label{2L3}\cite{Friedland2013}
            If the tensor $A$ is nonnegative weakly irreducible, then $\rho(A)>0$ is the eigenvalue of $A$, and it corresponds to a unique positive eigenvector of $A$ up to a multiplicative constant.
        \end{lem}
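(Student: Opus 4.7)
The plan is to establish this nonlinear Perron--Frobenius statement by combining a perturbation-based Brouwer fixed point argument for existence with a Collatz--Wielandt comparison for the spectral radius identification and for uniqueness. Write $F(x) := Ax^{t-1}$ for the homogeneous polynomial map of degree $t-1$ associated with $A$, and let $G_A$ denote the directed graph on $[n]$ with arc $i \to j$ whenever some entry $a_{i i_2 \cdots i_t} > 0$ has $j \in \{i_2, \ldots, i_t\}$. Unpacking the definition shows that weak irreducibility of $A$ is exactly strong connectivity of $G_A$, since a nonempty proper forward-closed set in $G_A$ is precisely an index set witnessing weak reducibility.

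For existence of a nonnegative eigenpair, for each $\epsilon > 0$ I would apply Brouwer's theorem to the continuous self-map of the compact simplex $S := \{x \geq 0 : \sum_{i=1}^n x_i^t = 1\}$ defined by
\begin{equation*}
\Phi_\epsilon(x)_i = \frac{\bigl(F(x)_i + \epsilon\bigr)^{1/(t-1)}}{\Bigl(\sum_{j=1}^n \bigl(F(x)_j + \epsilon\bigr)^{t/(t-1)}\Bigr)^{1/t}},
\end{equation*}
producing a strictly positive fixed point $x^{(\epsilon)} \in S$ with $F(x^{(\epsilon)}) + \epsilon\,\mathbf{1} = \lambda_\epsilon\,(x^{(\epsilon)})^{[t-1]}$ for some $\lambda_\epsilon > 0$. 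Passing to a subsequential limit as $\epsilon \to 0^+$ yields $x^\star \in S$ and $\lambda^\star \geq 0$ satisfying $A(x^\star)^{t-1} = \lambda^\star (x^\star)^{[t-1]}$.

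Strong connectivity of $G_A$ then promotes $x^\star$ and $\lambda^\star$ to strictly positive: one propagates positivity along directed paths in $G_A$, using at each step the strict positivity of the perturbed Perron vectors $x^{(\epsilon)}$ to rule out any vanishing component of $x^\star$. With $x^\star > 0$ in hand, the identification $\lambda^\star = \rho(A)$ follows by the standard Collatz--Wielandt comparison: for any eigenpair $(\mu, y)$, the entrywise triangle inequality gives $|\mu|\,|y_i|^{t-1} \leq (A|y|^{t-1})_i$, and evaluating at the index $i_0$ maximizing $|y_i|/x^\star_i$ yields $|\mu| \leq \lambda^\star$, so $\lambda^\star = \rho(A) > 0$.

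For uniqueness, let $y > 0$ be another eigenvector with $Ay^{t-1} = \rho(A)\,y^{[t-1]}$. Put $\beta = \max_i y_i/x^\star_i$ and $J = \{i : y_i = \beta x^\star_i\}$, so $y \leq \beta x^\star$ componentwise and $J \neq \emptyset$. Subtracting the two eigenequations at any $i_0 \in J$ produces
\begin{equation*}
\sum_{i_2,\ldots,i_t=1}^n a_{i_0 i_2 \cdots i_t}\bigl(\beta^{t-1} x^\star_{i_2} \cdots x^\star_{i_t} - y_{i_2} \cdots y_{i_t}\bigr) = 0,
\end{equation*}
and nonnegativity of each summand forces $\{i_2,\ldots,i_t\} \subseteq J$ for every tuple with $a_{i_0 i_2 \cdots i_t} > 0$. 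Hence $J$ is forward-closed in $G_A$, and strong connectivity gives $J = [n]$, i.e., $y = \beta x^\star$. The main technical obstacle is the digraph propagation step that promotes the limiting $x^\star$ to strict positivity: the raw eigenequation at a putative zero index only tells us that \emph{some} coordinate of every nonzero tuple from that index is outside the support, which is strictly weaker than forward closure, so rigor requires combining it with a quantitative comparison against the $\epsilon$-perturbed positive Perron vectors before the limit is taken.
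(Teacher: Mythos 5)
The paper does not prove this lemma at all: it is quoted as a known theorem of Friedland, Gaubert and Han, so there is no internal argument to compare against and your reconstruction must stand on its own. Parts of it do. The uniqueness step is correct and is the standard one: with two positive eigenvectors $x^\star,y$ for $\rho(A)$, equality of the positive products $y_{i_2}\cdots y_{i_t}=\beta^{t-1}x^\star_{i_2}\cdots x^\star_{i_t}$ under the termwise bounds $y_{i_j}\le \beta x^\star_{i_j}$ does force termwise equality, so $J$ is forward-closed in $G_A$ and strong connectivity gives $J=[n]$. The Collatz--Wielandt identification of $\lambda^\star$ with $\rho(A)$ is likewise fine \emph{once a strictly positive eigenvector is in hand}.

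The genuine gap is exactly the step you flag and then leave open: strict positivity of the limit $x^\star$. For tensors this is not a routine detail but the crux of the theorem. When $t\ge 3$, vanishing of $(A(x^\star)^{t-1})_{i_0}$ at a zero coordinate only says that every tuple with $a_{i_0i_2\cdots i_t}>0$ contains \emph{some} zero coordinate, which does not make the zero set forward-closed; and weakly irreducible nonnegative tensors genuinely admit nonnegative eigenvectors with zero entries. For instance, with $n=2$, $t=3$ and $a_{112}=a_{121}=a_{211}=1$ (all other entries $0$), the tensor is weakly irreducible, yet the vector with entries $0$ and $1$ is an eigenvector for the eigenvalue $0$, while $\rho(A)=4^{1/3}$. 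So positivity of $x^\star$ cannot be extracted from the eigenequation and weak irreducibility alone, and your proposed rescue --- a quantitative comparison with the $\epsilon$-perturbed Perron vectors --- would need a lower bound on $x^{(\epsilon)}_i$ uniform in $\epsilon$, whereas the fixed-point identity only gives $(x^{(\epsilon)}_i)^{t-1}\ge \epsilon/\lambda_\epsilon$, which degenerates as $\epsilon\to 0^+$. Closing this is precisely where Friedland--Gaubert--Han invoke nonlinear Perron--Frobenius theory: the monotone, degree-one homogeneous map $x\mapsto (Ax^{t-1})^{[1/(t-1)]}$ on the open cone has an eigenvector \emph{in the open cone} whenever its associated digraph is strongly connected (the Gaubert--Gunawardena theorem, via Hilbert's projective metric). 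As written, your argument delivers only a nonnegative eigenpair, so the existence half of the proof is incomplete.
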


        From Lemma~\ref{2L1} and Lemma~\ref{2L3}, it follows that the $t$-clique spectral radius of a $t$-clique connected graph $G$ corresponds to a unique positive eigenvector $x=\{x_1, x_2, \cdots, x_n\}$ satisfying $\sum_{i=1}^{n} x_i^t = 1$, which we refer to as the $t$-clique Perron vector of graph $G$.

        \begin{lem}\label{2L2}\cite{Shao2013}
            For a weakly reducible tensor $A$, there is a lower triangular block tensor such that $A$ is permutational similar to it and each diagonal block tensor is weakly irreducible. Furthermore, there is a diagonal block tensor such that its spectral radius is equal to $\rho(A)$.
        \end{lem}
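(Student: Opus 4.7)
The plan is to prove Lemma~\ref{2L2} by combining a combinatorial decomposition of the index set arising from the directed graph associated with $A$ with Perron--Frobenius theory for nonnegative tensors.

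For the structural part, I would associate to $A$ the digraph $D(A)$ on vertex set $[n]$ with an arc $i\to j$ whenever $j\in\{i_2,\ldots,i_t\}$ for some nonzero entry $a_{ii_2\cdots i_t}$. Under this correspondence, a proper nonempty $I\subset[n]$ witnessing weak reducibility is exactly a ``closed'' set in $D(A)$ (no arcs leave $I$), and conversely, so $A$ is weakly reducible if and only if $D(A)$ fails to be strongly connected. I would then decompose $D(A)$ into its strongly connected components $V_1,\ldots,V_r$; the condensation is a DAG, so we can linearly order the $V_p$ compatibly with its reverse topological order, meaning any arc $i\to j$ between distinct components runs from $V_p$ to $V_q$ with $p>q$. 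Let $P$ be the permutation realizing this ordering. Then every nonzero entry $a_{i_1 i_2\cdots i_t}$ with $i_1\in V_p$ has each $i_j\in V_q$ for some $q\leq p$, which is exactly the defining property of a lower triangular block tensor. Each diagonal block supported on $V_p$ has digraph equal to the induced subgraph of $D(A)$ on $V_p$, which is strongly connected, so (by Lemma~\ref{2L1} applied at the level of general nonnegative tensors, or directly from the definition) each diagonal block is weakly irreducible.

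For the spectral radius assertion, I would apply Perron--Frobenius for nonnegative tensors to obtain a nonzero nonnegative $x$ with $Ax^{t-1}=\rho(A)\,x^{[t-1]}$, partition $x=(x^{(1)},\ldots,x^{(r)})$ according to the blocks, and let $p$ be the minimal index with $x^{(p)}\neq 0$. Restricting the eigen-equation to indices $i\in V_p$, the lower triangular block form confines $i_2,\ldots,i_t$ to $V_{\leq p}$, and the minimality of $p$ kills any term involving $V_q$ with $q<p$ through $x^{(q)}=0$; the restriction therefore collapses to
\begin{align*}
A_{pp}(x^{(p)})^{t-1}=\rho(A)\,(x^{(p)})^{[t-1]},
\end{align*}
with $x^{(p)}\geq 0$ nonzero. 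Since $A_{pp}$ is weakly irreducible, Lemma~\ref{2L3} and its Collatz--Wielandt companion force $\rho(A_{pp})=\rho(A)$.

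The main technical obstacle I anticipate is this last step: Lemma~\ref{2L3} as stated only gives uniqueness of the positive Perron vector up to a scalar, and does not by itself exclude some other eigenvalue of $A_{pp}$ admitting a (necessarily boundary) nonnegative eigenvector such as $x^{(p)}$. To close this gap I would invoke the standard sharpening of Perron--Frobenius for nonnegative weakly irreducible tensors stating that the spectral radius is the unique eigenvalue possessing any nonzero nonnegative eigenvector; equivalently, one can pair the trivial monotonicity bound $\rho(A_{pp})\leq\rho(A)$, which follows from viewing $A_{pp}$ as a principal subtensor of $A$, with the reverse bound $\rho(A_{pp})\geq\rho(A)$ obtained by evaluating the Collatz--Wielandt min-functional of $A_{pp}$ on $x^{(p)}$. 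Either route yields the desired equality and completes the proof.
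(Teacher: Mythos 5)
The paper does not prove this lemma at all; it is quoted verbatim from Shao, Shan and Zhang \cite{Shao2013}, so there is no in-paper argument to compare yours against and your proposal must stand on its own. It does not: the structural half breaks at the step ``each diagonal block supported on $V_p$ has digraph equal to the induced subgraph of $D(A)$ on $V_p$.'' For $t\geq 3$ that identification is false. An arc $i\to j$ of $D(A)$ with $i,j\in V_p$ may be witnessed only by entries $a_{i i_2\cdots i_t}$ in which some \emph{other} index lies outside $V_p$; such entries are discarded when you pass to the principal subtensor $A[V_p]$, so the digraph of $A[V_p]$ can be a proper subgraph of $D(A)[V_p]$ and need not be strongly connected. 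Concretely, take $t=3$, $n=3$, with the only nonzero entries $a_{123},a_{132},a_{213},a_{231}$. Then $D(A)$ has arcs $1\to 2$, $2\to 1$, $1\to 3$, $2\to 3$, its strongly connected components are $\{1,2\}$ and $\{3\}$, and your ordering does put $A$ in lower triangular block form --- but the diagonal block $A[\{1,2\}]$ is the zero tensor of dimension $2$, which is weakly reducible. So the SCC decomposition does not deliver weakly irreducible diagonal blocks; this is precisely where the tensor case diverges from the Frobenius normal form of matrices, and it is not repaired by refining $V_p$ further, since in the same example no ordering of the refined blocks restores the strong lower-triangularity your argument uses. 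The canonical form in \cite{Shao2013} is built by a more delicate recursive construction, and their spectral claim is obtained from the factorization of the characteristic polynomial over the diagonal blocks rather than from Perron--Frobenius theory.

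Your second half is in better shape: given a lower triangular block form, restricting the eigenequation of a nonnegative eigenvector to the first block it touches, and then combining the Collatz--Wielandt characterization with monotonicity of the spectral radius under passing to principal subtensors, is a correct and standard way to get $\rho(A_{pp})=\rho(A)$. Two caveats: it silently assumes $A$ is nonnegative (the lemma as stated does not say so, and the determinantal route of \cite{Shao2013} needs no such hypothesis), and it relies on the Yang--Yang extension of Perron--Frobenius (existence of a nonnegative eigenvector for $\rho(A)$ with no irreducibility assumption), which is not supplied by Lemma~\ref{2L3} as cited here. Since this half consumes the output of the broken first half, the proposal as a whole does not establish the lemma.
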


        The following Lemma can be directly derived from Lemma~\ref{2L1} and Lemma~\ref{2L2}.
        \begin{lem}\label{2L2-1}
            If the graph $G$ is not $t$-clique connected, then there exists a $t$-clique connected component $H$ of $G$ such that $\rho_t(H) = \rho_t(G)$.
        \end{lem}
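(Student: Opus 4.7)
The plan is to use the $t$-clique connected components of $G$ to exhibit a block-diagonal decomposition of the tensor $A_t(G)$, then read off the conclusion from Lemma~\ref{2L1} and Lemma~\ref{2L2}.

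First, I would partition $V(G)$ into the vertex sets of its $t$-clique connected components, say $V(H_1),\ldots,V(H_m)$, together with the set $W$ of vertices lying in no $t$-clique. The key structural observation is that every $t$-clique of $G$ is entirely contained in a single $V(H_j)$, because the vertices of a common $t$-clique are $t$-clique connected by definition. Consequently every nonzero entry $a_{i_1\cdots i_t}$ of $A_t(G)$ has all of its indices in the same $V(H_j)$, so after a permutation $A_t(G)$ is block-diagonal with diagonal blocks $A_t(H_1),\ldots,A_t(H_m)$ (and a zero block indexed by $W$, if that set is nonempty).

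Next, I would apply Lemma~\ref{2L1} to each $H_j$ to conclude that $A_t(H_j)$ is weakly irreducible, while the zero block associated with $W$ has spectral radius $0$. The spectrum of a block-diagonal nonnegative tensor is the union of the spectra of its diagonal blocks, so $\rho_t(G)=\max_{1\le j\le m}\rho_t(H_j)$, and any index $j$ attaining this maximum yields the required component $H=H_j$.

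The only delicate point, and the place to be careful, is that Lemma~\ref{2L2} by itself only promises a lower-triangular (not block-diagonal) decomposition of a weakly reducible tensor. In our case the strictly lower-triangular off-diagonal blocks are forced to vanish, because $A_t(G)$ is symmetric and, as noted above, no nonzero entry can straddle two distinct $t$-clique connected components. Once this is recorded, the weakly irreducible diagonal blocks delivered by Lemma~\ref{2L2} match the $t$-clique tensors $A_t(H_j)$, and the component $H$ realizing the maximum spectral radius is exactly the one guaranteed by the statement.
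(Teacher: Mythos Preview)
Your argument is correct and follows the route the paper indicates: it simply asserts that Lemma~\ref{2L2-1} ``can be directly derived from Lemma~\ref{2L1} and Lemma~\ref{2L2},'' and your block-diagonal decomposition of $A_t(G)$ along the $t$-clique connected components, combined with Lemma~\ref{2L1} to certify that each block $A_t(H_j)$ is weakly irreducible and Lemma~\ref{2L2} to identify a block attaining $\rho_t(G)$, is exactly how one spells this out. Your remark that symmetry forces the off-diagonal blocks to vanish is the right observation connecting the lower-triangular form of Lemma~\ref{2L2} to the block-diagonal structure here.
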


        \begin{lem}\label{2L4}\cite{Khan2015}
            For two unequal $t$-order $n$-dimensional nonnegative tensors $B$, $C$, if $B-C$ is nonnegative and $B$ is weakly irreducible, then $\rho(C)<\rho(B)$.
        \end{lem}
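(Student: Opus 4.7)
\quad The plan is to combine the Perron--Frobenius theory for weakly irreducible nonnegative tensors with a Collatz--Wielandt bound, and then extract strictness from the hypothesis $B\neq C$.

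First, I would invoke Lemma~\ref{2L3} applied to $B$ to produce the unique positive Perron eigenvector $x>0$ satisfying $Bx^{t-1}=\rho(B)\,x^{[t-1]}$. Because $B-C$ is nonnegative and $x>0$, the vector $(B-C)x^{t-1}$ is nonnegative; it is moreover nonzero, since $B\neq C$ forces some entry $b_{i_1\cdots i_t}-c_{i_1\cdots i_t}>0$, and then the product $x_{i_2}\cdots x_{i_t}>0$ contributes strictly to the corresponding coordinate of $(B-C)x^{t-1}$. Hence $Cx^{t-1}\leq\rho(B)x^{[t-1]}$ coordinate-wise with strict inequality in at least one coordinate, and the standard Collatz--Wielandt bound $\rho(C)\leq\max_i(Cx^{t-1})_i/x_i^{t-1}$ already yields $\rho(C)\leq\rho(B)$.

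To upgrade this to a strict inequality, I would argue by contradiction and assume $\rho(C)=\rho(B)=:\rho$. Perron--Frobenius for nonnegative tensors supplies a nonzero nonnegative eigenvector $y\geq 0$ of $C$ with $Cy^{t-1}=\rho\,y^{[t-1]}$, so $By^{t-1}\geq Cy^{t-1}=\rho\,y^{[t-1]}$ entry-wise. The decisive step is to establish $y>0$: using the weak irreducibility of $B$ (equivalently, strong connectivity of its representation digraph), one propagates positivity along directed arcs to show that the support $\{i:y_i>0\}$ cannot be a proper nonempty subset of $[n]$ while satisfying $By^{t-1}\geq\rho y^{[t-1]}$. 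Once $y>0$ is known, the uniqueness clause of Lemma~\ref{2L3} forces $y$ to be a positive scalar multiple of $x$ and the inequality $By^{t-1}\geq\rho y^{[t-1]}$ to be an equality, whence $(B-C)x^{t-1}=0$; positivity of $x$ together with nonnegativity of $B-C$ then forces every entry of $B-C$ to vanish, contradicting $B\neq C$.

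The principal obstacle is the support-propagation step that promotes $y\geq 0$ to $y>0$, since weak irreducibility is a hypothesis on $B$ whereas $y$ arises as an eigenvector of $C$. The resolution is to exploit only the sub-eigenvector inequality $By^{t-1}\geq\rho y^{[t-1]}$ together with the directed-graph structure of $B$: any vertex $i$ outside the support of $y$ reached by an outgoing arc of $B$ forces $(By^{t-1})_i=0$, and strong connectivity then propagates this back to contradict $(By^{t-1})_{i_0}>0$ at some $i_0$ in the support. The remaining ingredients (existence of the nonnegative eigenvector $y$, uniqueness of $x$ up to scaling, and the final contradiction) are direct applications of the tools recorded earlier in this section.
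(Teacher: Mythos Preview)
This lemma is quoted from \cite{Khan2015} and the paper supplies no proof of its own, so there is no in-paper argument to compare your proposal against.

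On the proposal itself: the global architecture---take the Perron vector $x>0$ of $B$, use a Collatz--Wielandt bound to obtain $\rho(C)\le\rho(B)$, then assume equality and reach a contradiction via a nonnegative eigenvector $y$ of $C$---is the standard route and is sound up to the step you yourself flag as the principal obstacle. Your proposed resolution of that obstacle, however, does not work as written. The inequality $By^{t-1}\ge\rho\,y^{[t-1]}$ places \emph{no upper bound} on $(By^{t-1})_i$ at indices with $y_i=0$, so nothing in your hypotheses ``forces $(By^{t-1})_i=0$'' there. More fundamentally, the matrix-style digraph propagation breaks down for order $t\ge 3$: an arc $i\to j$ in the representation digraph of $B$ only certifies some tuple with $b_{i\,i_2\cdots i_t}>0$ and $j\in\{i_2,\dots,i_t\}$, not that \emph{all} of $i_2,\dots,i_t$ lie in $\mathrm{supp}(y)$; hence the summand $y_{i_2}\cdots y_{i_t}$ may still vanish, and positivity does not propagate along single arcs the way it does for matrices. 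Showing that any nonnegative $\rho(B)$-eigenvector of a weakly irreducible $B$ is strictly positive requires genuine nonlinear Perron--Frobenius input (Hilbert-metric contraction, or the results behind Lemma~\ref{2L3}) beyond the naive support argument you sketch. You also appeal to the existence of a nonnegative eigenvector of $C$ at $\rho(C)$ for a merely nonnegative $C$; this is true but is not among the tools recorded in Section~2, so it cannot be waved through as ``a direct application''.
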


        For a graph $G$, note $u, v$ is a pair of vertices in graph $G$, defined
            \begin{align*}
                P_v(u)=\{i\mid i\in N_{G}(u)\setminus\{v\},\mathrm{~and~}i\notin N_{G}(v)\}.
            \end{align*}
        Below is an introduction to a graph operation on $G $. Let $G_{u \to v}$ be the graph obtained from $G$ by deleting the edges between $u$ and the vertices in $P_v(u)$ and adding new edges connecting $v$ to vertices in $P_v(u)$, that is $V(G_{u \to v})=V(G)$ and
            \begin{align*}
                E(G_{u \to v}) = E(G) \setminus \{ \{u, i\} \mid i \in P_v(u) \} \cup \{ \{v, i\} \mid i \in P_v(u) \}.
            \end{align*}

        In 2019, Gao and Hou \cite{Gao2019} showed that such an operation, provided it is only applied to pairs of adjacent vertices, will terminate in finite steps and gave the following lemma.

        \begin{lem}\label{Gao1}\cite{Gao2019}
            Let $G$ be a graph with $n$ vertices and $\{u,v\} \in E(G)$, then $c(G_{u \to v}) \leq c(G)$.
        \end{lem}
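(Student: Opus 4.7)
The plan is to take a longest cycle $C$ in $G_{u\to v}$, of length $\ell := c(G_{u\to v})$, and to exhibit a cycle in $G$ of length at least $\ell$. The key structural input is that the operation only relocates the edges between $u$ and $P_v(u)$ onto edges between $v$ and $P_v(u)$, giving
\[
N_{G_{u\to v}}(u) = \{v\}\cup\bigl(N_G(u)\cap N_G(v)\bigr), \qquad N_{G_{u\to v}}(v) = N_G(v)\cup P_v(u).
\]
Two consequences will be used repeatedly: every neighbor of $u$ in $G_{u\to v}$ other than $v$ is a common neighbor of $u$ and $v$ in $G$; and every neighbor $i$ of $v$ in $G_{u\to v}$ with $i\in P_v(u)$ satisfies $\{u,i\}\in E(G)$.

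I first dispose of the easy cases. If neither $C$-neighbor of $v$ lies in $P_v(u)$ (in particular if $v\notin V(C)$), then $C\subseteq G$ and we are done. If $v\in V(C)$, $u\notin V(C)$, and exactly one $C$-neighbor $a$ of $v$ lies in $P_v(u)$, then inserting $u$ between $v$ and $a$ yields the cycle $v\to u\to a\to\cdots\to b\to v$ in $G$, of length $\ell+1$. If both $C$-neighbors $a,b$ of $v$ lie in $P_v(u)$ and $u\notin V(C)$, then replacing the detour $a\to v\to b$ by $a\to u\to b$ gives a cycle of length $\ell$ in $G$.

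The main case is $u,v\in V(C)$. I decompose $C$ into two internally disjoint $u$--$v$ paths $P_1,P_2$; let $a,b$ denote the $v$-neighbors on $P_1,P_2$, and $x,y$ the $u$-neighbors on $P_1,P_2$. By the structural fact each of $x,y$ is either $v$ or a common neighbor of $u$ and $v$ in $G$, so $\{v,x\},\{v,y\}\in E(G)$ whenever $x,y\neq v$. Assuming without loss of generality $a\in P_v(u)$, the $G$-cycle
\[
v - x - \cdots - a - u - y - \cdots - b - v
\]
(whose middle segments are the interiors of $P_1,P_2$) has length $\ell$; when $b\in P_v(u)$ as well, the analogous rerouting $u\to a\to\cdots\to x\to v\to y\to\cdots\to b\to u$ achieves the same length.

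The main obstacle I anticipate is the sub-case in which $u$ and $v$ are consecutive on $C$ (so one of $P_1,P_2$ is just the edge $\{u,v\}$) and the far $v$-neighbor $b$ lies in $P_v(u)$: a naive edge swap loses one unit of length. The fix exploits that $u$'s other $C$-neighbor $y$ is forced by the structural fact to be a common neighbor of $u$ and $v$ in $G$, yielding the replacement cycle $v\to y\to\cdots\to b\to u\to v$ of length $\ell$ in $G$ with edges $\{v,y\},\{b,u\},\{u,v\}\in E(G)$. Once this is settled, every longest cycle of $G_{u\to v}$ has a counterpart in $G$ of at least the same length, giving $c(G_{u\to v})\leq c(G)$.
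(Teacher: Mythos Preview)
The paper does not supply its own proof of this lemma: it is simply quoted as a result of Gao and Hou, so there is no argument in the present paper to compare your proposal against.

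That said, your case analysis is correct and is the natural approach. The two structural facts you isolate, namely $N_{G_{u\to v}}(u)=\{v\}\cup(N_G(u)\cap N_G(v))$ and $E(G_{u\to v})\setminus E(G)=\{\{v,i\}:i\in P_v(u)\}$, are exactly what is needed, and in each sub-case your rerouted cycle lies in $G$ with the stated length. One small point worth making explicit in a full write-up: whenever a $C$-neighbor $a$ of $v$ lies in $P_v(u)$, it is automatically distinct from the $C$-neighbor $x$ of $u$ on the same $u$--$v$ arc, because $x\in N_{G_{u\to v}}(u)\setminus\{v\}\subseteq N_G(v)$ while $a\notin N_G(v)$; hence that arc has length at least~$3$, so the ``interior'' segments $x\cdots a$ and $y\cdots b$ in your rerouted cycles are nonempty paths and no vertex is repeated. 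With that remark the argument is complete.
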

        In 2024, Ai, Lei and Ning gave the following lemma.
        \begin{lem}\label{L4-1}\cite{Ai2024}
            Let $G$ be a graph with $n$ vertices and $\{u,v\} \in E(G)$, then $p(G_{u \to v})\leq p(G)$.
        \end{lem}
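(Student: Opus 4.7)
The plan is to take a longest path $P$ in $G_{u\to v}$ and construct, by case analysis on how $u$ and $v$ sit on $P$, a path $P'$ in $G$ with $|V(P')|\ge|V(P)|$; this gives $p(G_{u\to v})\le p(G)$. The basic identities I would use are
\[
N_{G_{u\to v}}(u) = \{v\}\cup (N_G(u)\cap N_G(v)),\qquad N_{G_{u\to v}}(v) = N_G(v)\cup P_v(u),
\]
together with $P_v(u)\subseteq N_G(u)$ and the hypothesis $\{u,v\}\in E(G)$. The crucial consequence is that an edge of $P$ fails to lie in $G$ only if it has the form $\{v,w\}$ with $w\in P_v(u)$; only $v$'s incidences on $P$ need repair.

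The easy cases are dispatched as follows. If $v\notin V(P)$ then $P\subseteq G$ immediately (whether or not $u\in V(P)$). If $v\in V(P)$ but $u\notin V(P)$, I examine the one or two $P$-neighbours of $v$: if all lie in $N_G(v)$ then $P\subseteq G$; if all lie in $P_v(u)$ then relabelling $v$ as $u$ along $P$ produces a same-length path in $G$; and if $v$ has one $P$-neighbour in $N_G(v)$ and one in $P_v(u)$, then inserting $u$ between $v$ and the latter produces a strictly longer path in $G$.

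The heart of the argument is the case $u,v\in V(P)$. After possibly reversing $P$, write $P = Q_1\cdot u\cdot R\cdot v\cdot Q_2$ (with $Q_1$, $Q_2$, or $R$ possibly empty), and let $a,b$ denote the $P$-neighbours of $u$ and $c,d$ those of $v$. The first identity forces $a,b\in N_G(u)\cap N_G(v)$ whenever they exist and differ from $v$, so each of $a,b$ is a neighbour of both $u$ and $v$ in $G$. Calling $c$ (resp.\ $d$) \emph{bad} when it lies in $P_v(u)\setminus N_G(v)$---which in particular places it in $N_G(u)$---I would modify $P$ by one of three surgeries: swap the positions of $u$ and $v$ when both $c,d$ are bad; reverse $R$ in place when only $c$ is bad; swap $u$ and $v$ and reverse $R$ when only $d$ is bad (and do nothing if neither is bad). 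A few edge-by-edge checks, each invoking one of the identities above or $\{u,v\}\in E(G)$, confirm that $P'$ is a path in $G$ with $V(P')=V(P)$.

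The step I expect to be most fiddly is this last case, since the surgery depends on which of $c,d$ is bad, and several endpoint and $R=\emptyset$ degeneracies must be checked by hand. Once the case analysis is completed, applying it to a longest path in $G_{u\to v}$ yields a path in $G$ of at least as many vertices, completing the proof of $p(G_{u\to v})\le p(G)$.
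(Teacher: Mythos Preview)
The paper does not supply its own proof of this lemma; it simply quotes the result from Ai, Lei, Ning and Shi \cite{Ai2024}. There is therefore no in-paper argument to compare against.

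That said, your proposal is a sound direct proof. The two neighbourhood identities you state are correct under the hypothesis $\{u,v\}\in E(G)$, and from them the key fact---that the only edges of $G_{u\to v}$ missing from $G$ are of the form $\{v,w\}$ with $w\in P_v(u)$---follows immediately. Your surgeries (swap $u\leftrightarrow v$, reverse $R$, or both) each produce a valid path in $G$; the essential checks are that any $P$-neighbour of $u$ other than $v$ lies in $N_G(u)\cap N_G(v)$, so it can be re-attached to either of $u,v$, while a ``bad'' $P$-neighbour of $v$ lies in $N_G(u)$ and so can be re-attached to $u$. The endpoint and $R=\emptyset$ degeneracies you flag are routine: when $R=\emptyset$ one has $c=u\in N_G(v)$, which is never bad, so only the ``$d$ bad'' surgery can arise there. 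This is essentially the same argument used in \cite{Ai2024}.
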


\section{Proof of Theorems~\ref{CT1-1} and \ref{CT1}}

        In this section, we prove Theorems~\ref{CT1-1} and \ref{CT1}. We first provide the following lemma.

        \begin{lem}\label{CL1}
                Let $G$ be a graph with $n$ vertices, and let $u,v \in V(G)$, then $\rho_t(G_{u \to v}) \geq \rho_t(G)$.
        \end{lem}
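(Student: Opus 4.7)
The plan is to apply the Rayleigh-type characterization of $\rho_t$ (Lemma~\ref{1L1}). Let $x\ge 0$ be a unit vector with $\sum_i x_i^t=1$ and $A_t(G)x^t=\rho_t(G)$; such a maximizer exists by compactness. It then suffices to produce a nonnegative unit vector $z$ with $A_t(G_{u\to v})z^t\ge A_t(G)x^t$, for then Lemma~\ref{1L1} gives $\rho_t(G_{u\to v})\ge A_t(G_{u\to v})z^t\ge\rho_t(G)$.

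The combinatorial backbone is a bijection between the symmetric difference of the clique sets. Since the only edges modified by the operation are $\{u,i\}$ (deleted) and $\{v,i\}$ (added) for $i\in P_v(u)$, any $K\in K_t(G)\setminus K_t(G_{u\to v})$ must contain $u$ and meet $P_v(u)$; and because each vertex of $P_v(u)$ is non-adjacent to $v$ in $G$, we must also have $v\notin K$. Dually, every $K'\in K_t(G_{u\to v})\setminus K_t(G)$ satisfies $v\in K'$, $u\notin K'$, and $K'\cap P_v(u)\ne\emptyset$, and the map $\phi\colon K\mapsto (K\setminus\{u\})\cup\{v\}$ is a bijection between these two sets. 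If $x_v\ge x_u$, I would take $z=x$; telescoping via $\phi$ yields
\begin{align*}
A_t(G_{u\to v})x^t-A_t(G)x^t=t(x_v-x_u)\sum_{K\in K_t(G)\setminus K_t(G_{u\to v})}\prod_{i\in K\setminus\{u\}}x_i\ge 0,
\end{align*}
so the conclusion is immediate.

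The harder case is $x_v<x_u$, since the naive choice $z=x$ would decrease the form. The remedy is to take $z=y$ obtained from $x$ by swapping its $u$- and $v$-coordinates, which preserves $\sum_i y_i^t=1$. Under $y$ the symmetric-difference contributions now cancel exactly through $\phi$ (each matched pair evaluates to $x_u\prod_{i\in K\setminus\{u\}}x_i$ on both sides), and the difference $A_t(G_{u\to v})y^t-A_t(G)x^t$ reduces to a sum over the common cliques $S_1:=K_t(G)\cap K_t(G_{u\to v})$ containing exactly one of $u,v$. Writing $T_1,T_2\subseteq S_1$ for those containing $v$ (resp. $u$) but not the other, the difference equals
\begin{align*}
t(x_u-x_v)\Bigl[\sum_{K\in T_1}\prod_{i\in K\setminus\{v\}}x_i-\sum_{K\in T_2}\prod_{i\in K\setminus\{u\}}x_i\Bigr].
\end{align*}
The key point is that membership of $K\in T_2$ in $K_t(G_{u\to v})$ forces $K\setminus\{u\}\subseteq N_G(u)\cap N_G(v)$, so $\phi$ restricts to a product-preserving injection $T_2\hookrightarrow T_1$, making the bracketed expression nonnegative. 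The main obstacle is precisely this second case: one must identify that the coordinate swap $y$ is the correct substitute for $x$, and establish the second injection $T_2\hookrightarrow T_1$ that makes the one-sided clique sums compare in the desired direction.
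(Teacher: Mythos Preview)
Your overall strategy is sound, but there is one slip: $\phi$ is only an injection, not a bijection, from $K_t(G)\setminus K_t(G_{u\to v})$ into $K_t(G_{u\to v})\setminus K_t(G)$. A clique $K'$ in the latter set may use vertices of $N_G(v)\setminus N_G(u)$, and then $(K'\setminus\{v\})\cup\{u\}$ is not a clique of $G$. For instance, take $V(G)=\{u,v,a,b\}$ with edges $ua$, $vb$, $ab$ and $t=3$: here $K_3(G_{u\to v})=\{\{v,a,b\}\}$ while $K_3(G)=\emptyset$. This does not damage your conclusion---the unmatched cliques only add nonnegative terms to $A_t(G_{u\to v})z^t$---so your displayed identity should read ``$\ge$'' rather than ``$=$'', and in the second case the symmetric-difference pairing yields an inequality rather than an exact cancellation, again in the correct direction. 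With that correction your argument goes through; the $T_2\hookrightarrow T_1$ injection is verified exactly as you describe.

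The paper's own proof differs in two respects. First, it avoids your harder case altogether by observing that the transposition $(u\ v)$ is a graph isomorphism $G_{u\to v}\cong G_{v\to u}$, so one may assume $x_u\le x_v$ from the outset and run only your easy case. Your coordinate swap $y$ is implicitly the same manoeuvre, since $A_t(G_{u\to v})y^t=A_t(G_{v\to u})x^t$; thus your $T_1$, $T_2$ analysis is a longhand repetition of case~1 with the roles of $u$ and $v$ exchanged. Second, the paper takes $x$ to be the $t$-clique Perron vector, which requires $G$ to be $t$-clique connected, and then handles the non-connected case separately by passing to a component $H$ with $\rho_t(H)=\rho_t(G)$. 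Your appeal to a compactness maximizer via Lemma~\ref{1L1} sidesteps that case split entirely and is cleaner on that front.
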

        \begin{proof}
                If the graph $G$ is $t$-clique connected, by Lemma~\ref{2L1}, the tensor $A_t(G)$ is weakly irreducible. Let $x$ be the $t$-clique Perron vector of graph $G$ with the components corresponding to vertices $u$ and $v$ denoted by $x_u$ and $x_v$ respectively. Since $G_{u \to v}$ is isomorphic to $G_{v \to u}$, without loss of generality, we may assume $x_u \leq x_v$, otherwise, we can consider $G_{v \to u}$ as an alternative. We can construct an injection $\phi$ from the set of $t$-cliques in $G$ to the set of $t$-cliques in $G_{u\to v}$, such that for any $t$-clique $K_t^0$ in $G$, either $V(\phi(K_t^0))=V(K_t^0)$ or $V(\phi(K_t^0)) = V(K_t^0)\setminus \{u\} \cup \{v\}$, due to the condition $x_u \leq x_v$, we have
                    \begin{align*}
                        \sum_{\{i_1,\cdots ,i_t\} = \phi(K_t^0)}x_{i_1}x_{i_2}\cdots x_{i_t} \geq \sum_{\{i_1,\cdots ,i_t\}=  K_t^0}x_{i_1}x_{i_2}\cdots x_{i_t},
			        \end{align*}
                furthermore, there is
                    \begin{align*}
                         &A_t(G_{u\to v})x^t - A_t(G)x^t\\
                         = & t\sum_{\{i_1,\cdots ,i_t\}\in K_t(G_{u\to v})}x_{i_1}x_{i_2}\cdots x_{i_t}-t\sum_{\{i_1,\cdots ,i_t\}\in K_t(G)}x_{i_1}x_{i_2}\cdots x_{i_t}\\
                         \geq & 0.
			        \end{align*}

                Then, we have
                    \begin{align}\label{align1}
                         \rho_t(G_{u\to v}) \geq {A_t(G_{u\to v})x^t} \geq {A_t(G)x^t}= \rho_t(G).
			        \end{align}
                The equality in the first inequality holds if and only if $x$ is the eigenvector corresponding to the $t$-clique spectral radius $\rho_t(G_{u\to v})$ of $G_{u\to v}$, and the equality in the second inequality holds only if $x_u = x_v$ or $G = G_{u \to v}$.

                If the graph $G$ is not $t$-clique connected, then by Lemma~\ref{2L2-1}, there exists a $t$-clique connected component $H$ in $G$ such that $\rho_t(H) = \rho_t(G)$. We now discuss different cases based on the selection of vertices $u$ and $v$.

                (1) If $u, v \in V(H)$, let $x$ be the $t$-clique Perron vector of graph $H$, with the components corresponding to vertices $u$ and $v$ denoted by $x_u$ and $x_v$, respectively. Without loss of generality, we may assume $x_u \leq x_v$. Since $H_{u \to v}$ is a subgraph of $G_{u \to v}$, it follows that
                    \begin{align*}
                         \rho_t(G_{u \to v}) \geq \rho_t(H_{u \to v}) \geq {A_t(H_{u \to v})x^t} \geq {A_t(H)x^t} = \rho_t(H)= \rho_t(G).
			        \end{align*}

                (2) If $u \in V(G) \setminus V(H)$(for the case where $v  \in V(G) \setminus V(H)$, we can similarly consider $G_{v \to u}$), it is straightforward to verify that $H$ is a subgraph of $G_{u \to v}$, and thus we have
                    \begin{align*}
                         \rho_t(G_{u \to v}) \geq \rho_t(H) = \rho_t(G).
			        \end{align*}

        \end{proof}

        Let $\mathrm{SPEX}_t(n,C_{\geq k})$ denote the family of graphs with $n$ vertices that do not contain any cycle of length at least $k$ and attain the maximum $t$-cliques spectral radius.

        Define the set
            \begin{align*}
                &\mathrm{SPEX}_t'(n,C_{\geq k})\\=&\{G|G \in \mathrm{SPEX}_t(n,C_{\geq k})\ and\ for \ any\ \{u,v\} \in E(G),\ P_u(v)=\emptyset\ or\ P_v(u)=\emptyset\}.
            \end{align*}
       By Lemmas~\ref{Gao1} and \ref{CL1}, for any $G \in \mathrm{SPEX}_t(n, C_{\geq k})$, we can apply a sequence of operations to $G$ to obtain a graph $G'$ such that $G' \in \mathrm{SPEX}_t'(n, C_{\geq k})$, which implies that $\mathrm{SPEX}_t'(n, C_{\geq k})$ is nonempty. The following Lemma presents a property of graphs in the set $\mathrm{SPEX}_t'(n,C_{\geq k})$.

        \begin{lem}\label{CL2}
                 Let $\bar{G} \in \mathrm{SPEX}_t'(n,C_{\geq k})$, then every connected induced subgraph $G$ of $\bar{G}$ has diameter at most 2. Moreover, there exists at least one vertex $i$ in $G$ satisfying $d_{G}(i)=|V(G)|-1$.

        \end{lem}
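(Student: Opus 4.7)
The plan is first to translate the defining property of $\mathrm{SPEX}_t'(n,C_{\geq k})$ into a neighborhood-containment condition. By the definition of $P_v(u)$, we have $P_v(u)=\emptyset$ if and only if $N_{\bar G}(u)\setminus\{v\}\subseteq N_{\bar G}(v)$, so for every edge $\{u,v\}\in E(\bar G)$ at least one of $N_{\bar G}(u)\setminus\{v\}\subseteq N_{\bar G}(v)$ or $N_{\bar G}(v)\setminus\{u\}\subseteq N_{\bar G}(u)$ must hold. The key observation I would use throughout is that this property is inherited by every induced subgraph $G$ of $\bar G$: intersecting either inclusion with $V(G)$ preserves it, so for every edge $\{u,v\}$ of $G$, either $N_G(u)\setminus\{v\}\subseteq N_G(v)$ or $N_G(v)\setminus\{u\}\subseteq N_G(u)$.

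For the diameter bound I would argue by contradiction. Suppose some connected induced subgraph $G$ has diameter at least $3$ and let $x-a-b-y$ be a shortest path of length $3$ in $G$; because the path is shortest and $G$ is induced, $xa,ab,by\in E(G)$ while $xb,ay\notin E(G)$. Applying the inherited property to the edge $\{a,b\}$: if $N_G(a)\setminus\{b\}\subseteq N_G(b)$ then $x\in N_G(b)$, contradicting $xb\notin E(G)$; and if instead $N_G(b)\setminus\{a\}\subseteq N_G(a)$ then $y\in N_G(a)$, contradicting $ay\notin E(G)$. Hence every connected induced subgraph of $\bar G$ has diameter at most $2$.

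For the universal-vertex claim, let $v^*$ be a vertex of maximum degree in $G$, and suppose for contradiction that some $w\in V(G)\setminus\{v^*\}$ is not adjacent to $v^*$. By the diameter bound just established there exists $z\in V(G)$ with $v^*z,zw\in E(G)$. Apply the inherited property to the edge $\{v^*,z\}$. If $N_G(z)\setminus\{v^*\}\subseteq N_G(v^*)$, then $w\in N_G(z)\setminus\{v^*\}\subseteq N_G(v^*)$, a contradiction. Otherwise $N_G(v^*)\setminus\{z\}\subseteq N_G(z)$, and since $v^*\in N_G(z)$ but $v^*\notin N_G(v^*)\setminus\{z\}$, a disjoint-union count gives $d_G(z)\geq d_G(v^*)$; the maximality of $d_G(v^*)$ then forces $N_G(z)=(N_G(v^*)\setminus\{z\})\cup\{v^*\}$, so $N_G(v^*)\cup\{v^*\}=N_G(z)\cup\{z\}$, and $w\in N_G(z)$ together with $w\neq v^*$ yields $w\in N_G(v^*)$, again a contradiction. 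Therefore $v^*$ is adjacent to every other vertex of $G$, as required.

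I do not anticipate a serious obstacle; the whole argument amounts to unwrapping the definition of $P_v(u)$ and noting that the resulting neighborhood inclusions pass to induced subgraphs. The only point requiring some care is the degree comparison in the universal-vertex step, where the open-neighborhood inclusion has to be promoted to an equality of closed neighborhoods by invoking the maximality of $d_G(v^*)$.
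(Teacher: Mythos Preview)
Your argument is correct. The diameter part is identical to the paper's: both take a shortest path $u_1\text{--}u_2\text{--}v_2\text{--}v_1$ of length three, apply the property to the middle edge, and get a contradiction. For the universal vertex, however, you take a genuinely different and shorter route. The paper fixes an arbitrary vertex $u$, defines the non-neighbor set $Y(u)$ and the set $S(u)\subseteq N_G(u)$ of neighbors of $u$ that see $Y(u)$, shows that each $v\in S(u)$ is adjacent to all of $N_G(u)$, and then argues that the sets $N_G(v)\cap Y(u)$ for $v\in S(u)$ are pairwise comparable (hence form a chain), so the maximal one gives a universal vertex. Your approach instead starts from a maximum-degree vertex $v^*$ and, via a single common neighbor $z$ of $v^*$ and a hypothetical non-neighbor $w$, uses the neighborhood-inclusion dichotomy on the edge $\{v^*,z\}$ together with the maximality of $d_G(v^*)$ to force $w\in N_G(v^*)$. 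Your version avoids the auxiliary sets $Y(u),S(u)$ and the chain argument, at the price of a small bookkeeping step promoting the inclusion $N_G(v^*)\setminus\{z\}\subseteq N_G(z)$ to $N_G(z)=(N_G(v^*)\setminus\{z\})\cup\{v^*\}$; that step is clean and correct. Both approaches rest on the same inheritance observation (the defining condition of $\mathrm{SPEX}_t'(n,C_{\geq k})$ passes to induced subgraphs), which you make explicit and the paper uses implicitly.
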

        \begin{proof}

                Let $\bar{G} \in \mathrm{SPEX}_t'(n,C_{\geq k})$, and let $G$ be any connected induced subgraph of $\bar{G}$, observe that for any adjacent vertices $u$ and $v$ in $G$, either $P_u(v)$ or $P_v(u)$ is empty in $G$.

                If the diameter of $G$ is greater than $2$, then there exists a pair of non-adjacent vertices $u_1$ and $v_1$ such that the distance between them is $3$, i.e., there exist $u_2, v_2$ with $u_1 \sim u_2 \sim v_2 \sim v_1$ and $u_1 \nsim v_2$, $v_1 \nsim u_2$. Consequently, $P_{u_2}(v_2)\neq \emptyset$ and $P_{v_2}(u_2)\neq \emptyset$ in $G$, leading to a contradiction.

                Next, we prove that there exists a vertex $i \in V(G)$ with $d_{G}(i)=|V(G)|-1$. Fix an arbitrary vertex $u \in V(G)$, if $d_{G}(u)=|V(G)|-1$, the proof is complete. Otherwise, we discuss the case where $d_{G}(u) < |V(G)|-1$. We make the following definition in the graph $G$, let
                    \begin{align*}
                         Y(v)=& V(G) \setminus (N_{G}(v) \cup \{v\}), S(v) = N_{G}(v)\cap N(Y(v)), T(v) =& N_{G}(v)\setminus S(v),
			        \end{align*}
                where $N(Y(v))=\{u|u \in N_{G}(w), \text{ for some } \ w \in Y(v)\}$.
                If $d_{G}(u) < |V(G)|-1$, then $Y(u) \neq \emptyset$, since $G$ is connected, $S(u) \neq \emptyset$. For any $v_1 \in S(u)$ and any $v_2 \in N_{G}(u)\setminus\{v_1\}$, $v_1 \sim v_2$ hold, otherwise, there exists $w \in Y(u)\cap N_{G}(v_1)$ such that $v_2 \in P_{v_1}(u) \neq \emptyset$ and $w \in P_u(v_1) \neq \emptyset$, which is a contradiction.
                Since that $G$ has diameter $2$, $Y(u) \subseteq \bigcup_{v \in S(u)} N_{G}(v)$. If $|S(u)|=1$, let $S(u)=\{v\}$, then $Y(u) \subseteq N_{G}(v)$, and thus $d_{G}(v)=|G|-1$, completing the proof. If $|S(u)| \neq 1$, take any $v_1, v_2 \in S(u)$, since $v_1 \sim v_2$, either $P_{v_1}(v_2)=\emptyset$ or $P_{v_2}(v_1)=\emptyset$. Without loss of generality, assume $P_{v_2}(v_1)=\emptyset$. Then,
                    \begin{align*}
                        (N_{G}(v_1) \cap Y(u)) \subseteq (N_{G}(v_2) \cap Y(u)),
			        \end{align*}
                 Meaning there exists a vertex $v'$ such that for all $v \in S(u)$,
                    \begin{align*}
                        N_{G}(v) \cap Y(u) \subseteq N_{G}(v') \cap Y(u).
			        \end{align*}
                Since $Y(u) \subseteq \bigcup_{v \in S(u)} N_{G}(v)$, it follows that $Y(u) \subseteq N_{G}(v')$, and therefore $d_{G}(v')=|V(G)|-1$, completing the proof.

        \end{proof}

        In a graph $G$, a vertex of degree $|V(G)| - 1$ is called a universal vertex of $G$, the set of all universal vertices in $G$ is called the universal vertex set of $G$, denoted by $Un(G)$, and we write $un(G) = |Un(G)|$. It is easy to see that $Un(G)$ is a clique in $G$.

        Now we use Lemma~\ref{CL2} to analyze the structure of graphs in $\mathrm{SPEX}_t'(n, C_{\geq k})$. Let $G \in \mathrm{SPEX}_t'(n, C_{\geq k})$. Select an arbitrary connected component of $G$, denoted by $G_1$. By Lemma~\ref{CL2}, we know that $Un(G_1) \neq \emptyset$. If $V(G_1) \setminus Un(G_1) \neq \emptyset$, let $G_1'$ be the subgraph of $G_1$ induced by the vertex set $V(G_1) \setminus Un(G_1)$. Choose an arbitrary connected component of $G_1'$, denoted by $G_2$, and repeat this process.

        It is evident that for any $G_m$, the set $\bigcup_{i=1}^m Un(G_i)$ forms a clique in $G$ with $\sum_{i=1}^m un(G_i)$ vertices. Since $G$ is $C_{\geq k}$-free, we must have
            \begin{align*}
                k > \sum_{i=1}^m un(G_i) \geq m,
	       \end{align*}
        therefore, this process must terminate in at most $k - 1$ steps for any sequence of choices. That is, there exists some $m < k$ such that $V(G_m) \setminus Un(G_m) = \emptyset$, holds for any choice of connected components.

        For any positive integer $i$, we define the connected components of the subgraph induced on $V(G_i) \setminus Un(G_i)$ as the subordinate component of $G_i$. A graph $G_i$ satisfying $V(G_i) \setminus Un(G_i) = \emptyset$ is called a level-1 component, and it is easy to see that any level-1 component is a clique. If all subordinate components of $G_i$ are level-1 components, then $G_i$ is called a level-2 component. If every subordinate component of $G_i$ is either a level-1 or level-2 component, then $G_i$ is called a level-3 component.

        Next, we continue to study the structure of graphs in $\mathrm{SPEX}_t'(n, C_{\geq k})$. Before that, we first introduce a notation. For a subgraph $H$ of a graph $G$ and a $n$-order vertor $x $, define
            \begin{align*}
                L_r(G, H, x) = \sum_{\{i_1,\cdots ,i_r\}\in K_r(G)\ and\ \{i_1,\cdots ,i_r\}\subseteq N_c(H)\setminus V(H)}x_{i_1}\cdots x_{i_r}.
            \end{align*}
        where $N_c(H)$ is the set of common neighbors of the vertex set $V(H) $ in $G $, i.e., $N_c(H) = \{ i \in V(G) \mid i \in \bigcap_{j \in V(H)} N_G(j) \}$.

        \begin{lem}\label{CL4}
                 Let $\bar{G} \in \mathrm{SPEX}_t'(n, C_{\geq k})$, $G $ is a $t$-clique connected compoment of $\bar{G} $ that satisfies $\rho_t(G) = \rho_t(\bar{G})$, then the graph $G$ does not contain any level-3 components.
        \end{lem}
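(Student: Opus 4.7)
The plan is to argue by contradiction: assume some level-3 component $G_j$ exists in the decomposition of $G$. By the definition, $G_j$ has a level-2 subordinate $H$; since a level-2 component with only one subordinate clique would itself be a single clique (hence level-1), $H$ has at least two level-1 subordinate cliques $Q_1,\dots,Q_s$ with $s\geq 2$. Write $U := Un(H)$ and $U' := Un(G_j)$, so that the local structure inside $G$ contains $K_{|U'|}\vee\bigl(K_{|U|}\vee(Q_1\cup\cdots\cup Q_s)\bigr)$, possibly joined with other subordinates of $G_j$.

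The goal is to produce a $C_{\geq k}$-free graph $G^*$ on $V(\bar G)$ with $\rho_t(G^*)>\rho_t(\bar G)$, contradicting the extremality of $\bar G$. Let $x$ be the positive $t$-clique Perron vector of $G$, which exists and is unique by Lemmas~\ref{2L1} and~\ref{2L3}. Within each $Q_l$ the vertices are symmetric, so they share a common coordinate $x^{(l)}$; identify $Q_1$ with $x^{(1)}=\max_l x^{(l)}$. The proposed modification is to flatten the level-2 component $H$ into a level-1 one by relocating the vertices of $Q_2,\dots,Q_s$ into $Q_1$, carried out as a sequence of edge-swaps of the form in Lemma~\ref{CL1}. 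A Perron-vector comparison, together with an application of Lemma~\ref{2L4} to the tensors before and after each swap, should give $\rho_t(G^*)\geq\rho_t(G)$ at every step, with at least one step strict thanks to the dominance of $Q_1$.

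The main obstacle will be verifying that $G^*$ remains $C_{\geq k}$-free. Flattening a level-2 structure into a level-1 one can in principle create longer cycles: in $G$, any cycle traversing several of the $Q_l$'s must alternate through the hub vertices in $U\cup U'$, whereas the merged clique in $G^*$ admits a Hamilton cycle of its full size without such alternation. I expect a case split on whether $|U\cup U'|\geq s$ or $|U\cup U'|<s$ will be needed: in the first case, each cycle of $G^*$ corresponds to a cycle of at least the same length in $G$ by an explicit alternation argument, so $c(G^*)\leq c(\bar G)<k$; in the second case, the modification must be adapted, for instance by promoting some $Q_l$ vertices into $U$ or $U'$ (using Lemma~\ref{CL1} again) so that the merged clique is never too long. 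Coordinating this case split with the Perron-vector argument to retain strict spectral increase is the technical heart of the proof, and is where I expect the bulk of the work to lie.
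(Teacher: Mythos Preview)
Your overall framework is right: assume a level-3 component exists and build a $C_{\geq k}$-free modification with strictly larger $t$-clique spectral radius, contradicting extremality of $\bar G$. But the specific modification you propose---merge $Q_2,\dots,Q_s$ into $Q_1$ so that the level-2 piece $H$ becomes a single clique---will not go through, and the paper proceeds quite differently.

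The gap is in circumference control. Merging $Q_1,\dots,Q_s$ turns $H$ into $K_{|V(H)|}$, which is Hamiltonian. In the original $H=K_{|U|}\vee(Q_1\cup\cdots\cup Q_s)$ a cycle can visit at most $|U|$ of the $Q_i$'s (one hub per transition), so $c(H)=|U|+\sum_{i\le\min(s,|U|)}|Q_i|$. Whenever $s>|U|$ your merge strictly lengthens the longest cycle through $H$. Your proposed split on $|U\cup U'|\ge s$ does not rescue this: the hubs in $U'=Un(G_j)$ also serve the \emph{other} subordinates of $G_j$, so they cannot be allocated freely to $H$; tracking cycles that weave through $U'$, other subordinates of $G_j$, and the merged clique does not reduce to a clean alternation count. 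In the complementary case your remedy of ``promoting some $Q_l$ vertices into $U$ or $U'$'' is precisely where the real argument lives---one has to design the promotion so that $\rho_t$ strictly rises \emph{and} $p(\cdot)$ does not, and that requires an explicit construction and computation, not a deferral. A further issue: realizing the merge as a chain of $u\to v$ swaps (Lemma~\ref{CL1}) is awkward because $Q_1$ and $Q_2$ share no edges, so Lemma~\ref{Gao1} is unavailable and the circumference must be controlled by hand throughout the chain; you only sketch this for the endpoints.

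The paper never flattens the level-2 piece. Writing $H$ for the level-3 component (the paper's naming is opposite to yours), it first proves two structural claims. Claim~\ref{CC1}: each level-2 subordinate $D_h$ may be assumed to have at most one non-singleton subordinate clique. The operation is not a merge but a \emph{promotion}: take $m_2-1$ vertices from the largest clique $M_1$, connect them to every vertex of $M_2,\dots,M_l$, and delete the internal edges of $M_2,\dots,M_l$; a direct calculation of $A_t(G')x^t-A_t(G)x^t$ on the Perron vector gives strict increase, while $p(D_h')=2m_0+m_1+m_2-1\le p(D_h)$ controls circumference. Claim~\ref{CC2}: $H$ has exactly one level-2 subordinate, proved by comparing two opposite edge-relocations between $D_1$ and $D_2$ and showing one of them strictly increases $\rho_t$. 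Only after these reductions does the paper finish with a case analysis on the relative sizes of $un(D_1)$, the largest level-1 subordinate $m_1$, and the maximum clique $k(D_1)$ in $D_1$, each case rebuilt into an $S_{|V(H)|,a,b}$ via chains of $u\to v$ swaps whose circumference is now controllable thanks to the restricted structure from the two claims. Your plan would need to recover essentially this two-step reduction before the final move; the single ``flatten'' step does not substitute for it.
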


        \begin{proof}

                Suppose $G$ contains a level-3 component $H$, the level-2 components among the subordinate components of $H$ be ordered by the size of their universal vertex sets in descending order as $D_1, \cdots, D_p$. We now consider the level-2 components on $H$.
            \begin{claim}\label{CC1}
                For each $h \in [p]$, the level-2 component $D_h$ has at most one subordinate component that contains a clique with more than one vertex.
            \end{claim}
            \begin{proof}
                Let $un(D_h) = m_0$. By definition, all subordinate components of the level-2 component $D_h$ are complete graphs, denote them in descending order of size as $M_1, M_2, \cdots, M_l$, and let their respective numbers of vertices be $m_1, m_2, \cdots, m_l$.
                By symmetry, for the $t$-clique Perron vector $x$ of $G$, any two vertices $i, j$ such that both belong to the same clique $M_k$, or both belong to $Un(D_h)$, have the same $t$-clique Perron vector component, hence, we denote the this $t$-clique Perron vector entry corresponding to any vertex in $M_k$ by $x_k$ and corresponding to any vertex in $Un(D_h)$ by $x_0$.
                For given $i, j \geq 1$, if $m_i > m_j$, then $x_i > x_j$, otherwise, if $m_i > m_j$ and $x_i \leq x_j$, we arbitrarily select $m_j$ vertices from $V(M_i)$ to form a induced subgraph of $M_i$, denoted as $M_i'$, then, by swapping the components of $x$ corresponding to $V(M_i')$ and $V(M_j)$, we obtain the vector $y$ with its $t$-norm equal to $1$. According to the definition, $L_r( G , M_i, x)=L_r( G ,M_j, x)$ holds for any $r$, thus, we have
                    \begin{align*}
                        &A_t(G)y^t-A_t(G)x^t
                        \\= &t\sum_{r=1}^t L_{t-r}(G, M_i, x) \left( \sum_{s=1}^r \begin{pmatrix} m_j\\s \end{pmatrix} \begin{pmatrix}m_i-m_j\\ r-s \end{pmatrix} x_i^{r-s}x_j^{s} + \begin{pmatrix}m_j\\r\end{pmatrix} x_i^r \right.
                        \\& \left.- \sum_{s=1}^r \begin{pmatrix} m_j\\s \end{pmatrix} \begin{pmatrix}m_i-m_j\\ r-s \end{pmatrix} x_i^{r} - \begin{pmatrix}m_j\\r\end{pmatrix} x_j^r \right)
                        \\= &t\sum_{r=1}^t L_{t-r}(G, M_i, x) \sum_{s=1}^{r-1} \begin{pmatrix} m_j\\s \end{pmatrix} \begin{pmatrix}m_i-m_j\\ r-s \end{pmatrix} x_i^{r-s}(x_j^{r}-x_i^{r})
                        \\\geq &0,
                    \end{align*}
                equality holds if and only if $x_i=x_j$, but since $m_i > m_j$, by comparing the components of the vertices in $M_i $ and $M_j $ through Equation (1), it can be concluded this cannot be true, therefore, we have $A_t(G)y^t-A_t(G)x^t > 0$, this contradicts the fact that $x$ is the $t$-clique Perron vector.

                If $m_2 = 1$, then the Claim already holds, so it suffices to consider the case where $m_2 \geq 2$. Suppose $m_2 \geq 2$, consider the following operation on $D_h$ to obtain a new graph $D_h'$:

                \noindent (1) Arbitrarily choose $m_2 - 1$ vertices from $V(M_1)$ to form an induced subgraph of $M_1$, denoted by $M_1'$. Connect each pair $(i, j)$, where $i \in V(M_1')$ and $j \in \bigcup_{i=2}^l V(M_i)$;

                \noindent (2) For each $i \in {2, \cdots, l}$, delete all edges in $E(M_i)$.

                It is easy to see that $D_h'$ is still a level-2 component, and it has at most one subordinate component whose clique size is greater than 1. Define a new graph $G'$ with vertex set $V(G') = V(G)$ and edge set $E(G') = E(G) \setminus E(D_h) \cup E(D_h')$, then $p(D_h') = 2m_0+m_1+m_2-1 \leq \sum_{i=0}^{m_0+1}m_i = p(D_h)$, furthermore, $c(G') \leq c(G) \leq c(\bar{G})$, and
                    \begin{align*}
                        &\rho_t(G')-\rho_t(G)
                        \\\geq & A_t(G')x^t-A_t(G)x^t
                        \\= &\sum_{r=2}^t L_{t-r}(G, D_h, x) \left( \sum_{i=2}^l m_i \sum_{s=1}^{r-1} \begin{pmatrix}m_2-1\\s\end{pmatrix} \begin{pmatrix}m_0\\r-s-1\end{pmatrix}x_1^sx_0^{r-s-1}x_i \right)\\
                        &- \sum_{r=2}^t L_{t-r}(G, D_h, x) \left( \sum_{i=2}^l \sum_{s=2}^{r} \begin{pmatrix}m_i\\s\end{pmatrix} \begin{pmatrix}m_0\\r-s\end{pmatrix}x_i^{s}x_0^{r-s} \right)
                        \\= &\sum_{r=2}^t L_{t-r}(G, D_h, x) \\
                        &\sum_{i=2}^l \sum_{s=1}^{r-1}\begin{pmatrix}m_0\\r-s-1\end{pmatrix}x_0^{r-s-1}x_i\left( \begin{pmatrix}m_2-1\\s\end{pmatrix}m_ix_1^s-\begin{pmatrix}m_i\\s+1\end{pmatrix}x_i^s \right)
                        \\\geq &0,
                    \end{align*}

                The equality in the first inequality holds if and only if $G'$ and $G$ share the same $t$-clique Perron vector.
                By symmetry, in the graph $G'$, the vertices in $M_1'$ and those in $M_0$ have the same $t$-clique Perron vector components,
                however, in the graph $G$, by comparing the components of the vertices in $M_1 $ and $M_0 $ through Equation (1), we have $x_0 > x_1$, which implies that the $t$-clique Perron vectors of $G$ and $G'$ are different, therefore, the equality does not hold. The second inequality holds because $m_2 \geq m_i$ and $x_1 \geq x_i$ for all $i \in {2, 3, \cdots, l}$, hence,

                    \begin{align*}
                        \rho_t(G') > \rho_t(G) = \rho_t(\bar{G}),
                    \end{align*}
                this is a contradiction.
            \end{proof}

            \begin{claim}\label{CC2}
                $H$ have exactly one level-2 subordinate component.
            \end{claim}

            \begin{proof}

                Let $D_1$ and $D_2$ be two different level-2 components which are subordinate components of $H$. Denote the subordinate components of $D_1$, ordered by clique size from largest to smallest, as $M_1^1, M_1^2, \ldots, M_1^{p_1}$, and those of $D_2$ as $M_2^1, M_2^2, \ldots, M_2^{p_2}$.
                Obviously $un(D_i) \leq p_i - 1$; otherwise, by connecting any two vertices in $D_i$ to obtain a  complete graph $D_i'$, the circumference of $G$ does not increase by $p(D_i)=p(D_i')=|V(D_i)|$, and the $t$-clique spectral radius increases, because the operation introduces new $t$-cliques, for instance, for $u \in Un(D_i)$ and $v, w \in V(D_i) \setminus Un(D_i)$ with $v \nsim w$, since $G$ is $t$-clique connected, it follows from the neighborhoods of these vertices that there exists a $t$-clique $K$ in $G$ containing $u$ and $v$, then, connecting $w$ and $v$ yields a new $t$-clique $K'$, where $V(K') = V(K) \setminus \{u\} \cup \{w\}$.

                By Claim~\ref{CC1}, each level-2 component has at most one subordinate component with more than one vertex, therefore, $|M_i^{j_i}| = 1$, where $j_i \in \{2,3,\ldots,p_i\}$ and $i \in \{1,2\}$.

                Let $x$ be the $t$-clique Perron vector of $G$, by symmetry, we may assume that the entry of $x$ corresponding to any vertex in $Un(D_1)$ is $x_1$, and that corresponding to any vertex in $Un(D_2)$ is $x_2$, let the entries corresponding to $M_1^2, \ldots, M_1^{p_1}$ be denoted by $x_1'$, and those corresponding to $M_2^2, \ldots, M_2^{p_2}$ by $x_2'$.

                Now consider modifying the graph $G$ as follows: delete all edges with one incident vertex in $\bigcup_{i=2}^{p_2} V(M_2^i)$ and the other in $Un(D_2)$, and then add all possible edges between vertices in $\bigcup_{i=2}^{p_2} V(M_2^i)$ and $Un(D_1)$ to obtain a new graph $G'$. Similarly, delete all edges with one incident vertex in $\bigcup_{i=2}^{p_1} V(M_1^i)$ and the other in $Un(D_1)$, and then add all possible edges between vertices in $\bigcup_{i=2}^{p_1} V(M_1^i)$ and $Un(D_2)$ to obtain a new graph $G''$, clearly, $c(G') \leq c(G) \leq c(\bar{G})$ and $c(G'') \leq c(G) \leq c(\bar{G})$. It is easy to see that $L_r(G, D_1, x) = L_r(G, D_2, x)$ holds for any $r$, hence
                    \begin{align*}
                        &\rho_t(G')-\rho_t(G)
                        \geq  A_t(G')x^t-A_t(G)x^t
                        \\= & t\sum_{r=1}^{t}L_{t-r}(G, D_1, x) (p_2-1)x_2' \left( \begin{pmatrix}un(D_1)\\r-1\end{pmatrix}x_1^{r-1}-\begin{pmatrix}un(D_2)\\r-1\end{pmatrix}x_2^{r-1} \right),
                    \end{align*}
                and
                    \begin{align*}
                        &\rho_t(G'')-\rho_t(G)
                        \geq  A_t(G'')x^t-A_t(G)x^t
                        \\= & t\sum_{r=1}^{t}L_{t-r}(G, D_1, x) (p_1-1)x_1' \left( \begin{pmatrix}un(D_2)\\r-1\end{pmatrix}x_2^{r-1}-\begin{pmatrix}un(D_1)\\r-1\end{pmatrix}x_1^{r-1} \right),
                    \end{align*}
                By comparing the two equations above, it is clear that either $\rho_t(G')-\rho_t(G)\geq 0$ or $\rho_t(G'')-\rho_t(G)\geq 0$, without loss of generality, assume $\rho_t(G')-\rho_t(G)\geq 0$, where equality holds if and only if the graph $G'$ has the same $t$-clique Perron vector as $G$ and
                    \begin{align*}
                        \begin{pmatrix}un(D_1)\\r-1\end{pmatrix}x_1^{r-1}= \begin{pmatrix}un(D_2)\\r-1\end{pmatrix}x_2^{r-1},
			        \end{align*}
                however, from the characteristic equation corresponding to any vertex in ${Un}(D_1)$ in Equation (1), we conclude that $x$ cannot be an eigenvector of $G'$, thus the equality condition cannot hold. Furthermore, since $G'$ has strictly fewer level-2 subordinate components than $G$, and $\rho_t(G') > \rho_t(G) =\rho_t(\bar{G})$, we arrive at a contradiction.

            \end{proof}

            We now proceed to conclude the proof of the lemma. By Claim~\ref{CC2},
            the subordinate component set of $H$ contains exactly one level-2 component, i.e., $D_1$.
            With the exception of $D_1$, all other subordinate components of $H$ are cliques, ordered by size in descending sequence as $M_1, M_2, \ldots, M_l$, with their respective sizes denoted by $m_1, \ldots, m_l$. Obviously $l \neq 0$, otherwise $H$ would be a level-2 component, which is a contradiction.

            If $m_1 = 1$, then for $i \in [l]$, $M_i$ contains only one vertex, after fully connecting these vertices with the vertices in $Un(D_1)$, the graphs $G'$ and $H'$ are obtained from the graph $G$ and its subgraph $H$, respectively. Then, compared to $G$, $G'$ generates new $t$-cliques and $p(H') = p(H)$, so $\rho_t(G') > \rho_t(G)$ and $c(G') \leq c(G)$.

            If $l = 1$, by adding all edges between the vertex sets $V(Un(D_1))$ and $V(M_1)$, we immediately obtain graph $G'$ with $\rho_t(G') > \rho_t(G)$ and $c(G')\leq c(G)$.

            Therefore, it suffices to consider the case when $m_1 \geq 2$ and $l \geq 2$.

            When $m_1 \geq 2$ and $l \geq 2$, select a maximum clique on $D_1$ and denote it as $K(D_1)$, let $|K(D_1)| = k(D_1)$. It is easy to see that $Un(D_1) \subsetneq K(D_1)$.

            If $un(D_1) > m_1$, similarly to Claim~\ref{CC1}, it can be shown that the following operation will yield a graph $G'$ such that $c(G') \leq c(G)$ and $\rho_t(G') > \rho_t(G)$:

            \noindent (1) Arbitrarily choose $m_1 - 1$ vertices from $V(Un(D_1))$ to form an induced subgraph of $Un(D_1)$, denoted by $Un(D_1)'$. Connect each pair $(i, j)$, where $i \in V(Un(D_1)')$ and $j \in \bigcup_{i=1}^l V(M_i)$;

            \noindent (2) For each $i \in {1, \cdots, l}$, delete all edges in $E(M_i)$.

            If $k(D_1) \geq m_1 \geq un(D_1)$, consider the following method to move the neighbors of the vertices in $M_i$ to the vertices of $K(D_1)$. Denote the vertices in $M_1$ as $v_1^1, v_1^2, \cdots, v_1^{m_1}$.
            Select $m_1 - 1$ vertices on $K(D_1)$ (preferentially choosing vertices from $Un(D_1)$), denoted as $u_1^1, u_1^2, \cdots, u_1^{m_1 - 1}$. Let $H_1^0 = H$. For $i = 1, \cdots, m_1 - 1$, let $H_1^i = {H_1^{i-1}}_{v_1^i \to u_1^i}$, obtaining the graph $H_1^{m_1 - 1}$, then, the induced subgraph of $H_1^{m_1 - 1}$ on $V(Un(H)) \cup V(D_1) \cup V(M_1)$ is a proper subgraph of $S_{|V(H)|, m_1+un(H)-1, k(D_1) - m_1 + 1}$.
            Next, denote the vertices in $M_2$ as $v_2^1, v_2^2, \cdots, v_2^{m_2}$. Let $H_2^0 = H_1^{m_1 - 1}$, for $i = 1, \cdots, m_2 - 1$, let $H_2^i = {H_2^{i-1}}{v_2^i \to u_1^i}$, obtaining the graph $H_2^{m_2 - 1}$.
            Continue this process iteratively for $i = 3, \ldots, l$, applying the same operation to each $M_i$, to finally obtain the graph $H_l^{m_l - 1}$.
            Then, $H_l^{m_l - 1}$ is still a proper subgraph of
                \begin{align*}
                        S_{|V(H)|, m_1+un(H)-1, k(D_1) - m_1 + 1}.
                \end{align*}
            It is easy to see that $p(S_{|V(H)|, m_1+un(H)-1, k(D_1) - m_1 + 1}) = 2un(H)+m_1+k(D_1)-1 \leq p(H)$.
            Furthermore, by replacing the edge subset $E(H)$ in $G$ with
                \begin{align*}
                        E(S_{|V(H)|, m_1+un(H)-1, k(D_1) - m_1 + 1})
                \end{align*}
            we obtain the graph $G'$, we have $c(G') \leq c(G)$. And then, according to Lemma~\ref{CL1}, there is $\rho_t(G') > \rho_t(G)$.

            If $m_1 > k(D_1)$, similar to the case when $k(D_1) \geq m_1 \geq un(D_1)$, we can likewise obtain a graph $G'$ such that $c(G') \leq c(G)$ and $\rho_t(G') > \rho_t(G)$ by sequentially transferring the neighbors of the vertices in $K(D_1)$ and $M_i$ to the vertices in $M_1$ for all $i = {2, 3, \cdots, l}$.

            That is, there exists a $C_{\geq k}$-free graph $G'$ with $n$ vertices such that
                    \begin{align*}
                        \rho_t(G')>\rho_t(G) = \rho_t(\bar{G}),
                    \end{align*}
            This contradicts the assumption that $\bar{G}\in \mathrm{SPEX}_t'(n,C_{\geq k})$.

        \end{proof}

            By Lemma~\ref{CL4}, if $\bar{G} \in \mathrm{SPEX}_t'(n, C_{\geq k})$, $G $ is a $t$-clique connected compoment of $\bar{G} $ that satisfies $\rho_t(G) = \rho_t(\bar{G})$, then the graph $G$ is a level-2 component or complete graph, furthermore, if $G$ is a level-2 component,  Claim~\ref{CC1} implies that among its subordinate components, at most one clique has size greater than $1$. Thus, $G$ can be expressed as $S_{n',un(G),l}$, i.e., $K_{un(G)} \vee \{K_l \cup I_{n'-un(G)-l}\}$, where $n'$ represents the number of vertices in graph $G$, and $l \neq 1$ is a integer. When $G $ is a complete graph, we have $un(G)=n'$, $l = 0$ and $c(G) = n' < k$, since $G$ is $t$-clique connected, $n' \geq t$;
            when $G $ is a level-2 component, we have $un(G) < n'$ and $l \geq 2$, furthermore, we have $n'-un(G)-l > un(G)$, otherwise, by fully connecting all pairs of vertices in graph $G$, the complete graph $K_{n'} $ is obtained, then we have $c(K_{n'}) = c(G) = n'$ and $\rho_t(K_{n'}) > \rho_t(G) = \rho_t(\bar{G}) $, this is contradictory.
            Then $c(G) = 2un(G) + l - 1 < k$, that is $\frac{k}{2} > {un}(G)$. And then, since $G$ is $t$-clique connected, $un(G) \geq t-1$.

            \begin{proof}[Proof of Theorem~\ref{CT1-1}]
            When $ t > \lfloor \frac{k+1}{2} \rfloor $, for $\bar{G} \in \mathrm{SPEX}_t'(n, C_{\geq k})$, $G $ is a $t$-clique connected compoment of $\bar{G} $ that satisfies $\rho_t(G) = \rho_t(\bar{G})$, if $G $ is a level-2 component, we have $ t > \lfloor \frac{k+1}{2} \rfloor \geq \frac{k}{2} > {un}(G) $, since $ t $, $ \lfloor \frac{k+1}{2} \rfloor $, and $ {un}(G) $ are all integers, it follows that $ t-1 > {un}(G) $, this is contradictory. Therefore, $ G $ is a complete graph, hence, $ G = K_{k-1} $ by Lemma~\ref{2L4}.
            This completes the proof.
            \end{proof}

             Now we prove that $\mathrm{SPEX}_t'(n,C_{\geq k}) $ contains unique elements when $2 \leq t \leq \lfloor \frac{k+1}{2} \rfloor$.

        \begin{lem}\label{CL5}
            For sufficiently large $n$, $2 \leq t \leq \lfloor \frac{k+1}{2} \rfloor$, when $k $ is odd (resp. even), if an $n$-vertex graph $\bar{G} \in \mathrm{SPEX}_t'(n,C_{\geq k}) $, then $\bar{G} = S_{n, \frac{k-1}{2} }$ (resp. $\bar{G} = S_{n, \frac{k-2}{2} }^+$).
        \end{lem}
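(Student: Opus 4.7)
The plan is to leverage the structural classification obtained from Lemma~\ref{CL4} and the analysis immediately after its proof. By Lemma~\ref{2L2-1}, any $\bar G \in \mathrm{SPEX}_t'(n, C_{\geq k})$ has a $t$-clique connected component $G$ with $\rho_t(G) = \rho_t(\bar G)$; the subsequent discussion shows $G$ is either a complete graph $K_{n'}$ with $n' \le k-1$, or the join $K_s \vee (K_l \cup I_{n'-s-l})$ with $s = un(G) \ge t-1$ and circumference $2s + l - 1 < k$ (where $l = 1$ corresponds to $K_s \vee I_{n'-s}$, i.e., no nontrivial subordinate clique). The proof splits into three steps: eliminate the complete case for large $n$, upgrade $G$ to an $n$-vertex graph, and optimise the pair $(s, l)$.

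To eliminate the complete case, observe that $\rho_t(K_{n'}) \le \rho_t(K_{k-1})$ is bounded by a $k$-dependent constant. On the other hand, applying Lemma~\ref{1L1} to the test vector that assigns value $a$ to the $\lfloor(k-1)/2\rfloor$ universal vertices of $S_{n, \lfloor(k-1)/2\rfloor}$ and value $b$ to the remaining vertices, and optimising $a, b$ under $\sum x_i^t = 1$, yields $\rho_t(S_{n, \lfloor(k-1)/2\rfloor}) = \Theta(n^{(t-1)/t})$. Since this graph is $C_{\geq k}$-free and $\bar G$ is extremal, $\rho_t(\bar G) \to \infty$, which contradicts the constant bound once $n$ is sufficiently large.

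For the upgrade step, assume $G = K_s \vee (K_l \cup I_{n'-s-l})$ with $n' < n$, and let $H = K_s \vee (K_l \cup I_{n-s-l})$. Then $H$ has the same circumference $2s + l - 1 < k$, is $t$-clique connected, and contains every $t$-clique of $G$ together with $(n-n')\binom{s}{t-1}$ new $t$-cliques (each new non-universal vertex joins $\binom{s}{t-1}$ cliques with $t-1$ universal vertices). Lemma~\ref{2L4} then gives $\rho_t(H) > \rho_t(G) = \rho_t(\bar G)$, contradicting extremality; hence $\bar G$ itself equals $K_s \vee (K_l \cup I_{n-s-l})$. For the final step, I would write the eigenequation for the symmetry classes of $\bar G$ (universal vs.\ non-universal vertices, with the $K_l$ vertices collapsing to the non-universal class at leading order) and solve at leading order in $n$, obtaining $\rho_t(\bar G) = \binom{s}{t-1}\bigl((t-1)/s\bigr)^{(t-1)/t} n^{(t-1)/t}(1+o(1))$. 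The leading coefficient depends only on $s$ and is strictly increasing in $s$ on $s \ge t-1$ (a short elementary computation using $\binom{s-1}{t-2} = \tfrac{t-1}{s}\binom{s}{t-1}$), so the circumference constraint forces $s$ to be its maximum feasible value $\lfloor(k-1)/2\rfloor$. For $k$ odd this is realised only by $l = 1$, yielding $\bar G = S_{n, (k-1)/2}$. For $k$ even, both $l = 1$ and $l = 2$ realise $s = (k-2)/2$; Lemma~\ref{2L4} breaks the tie, since $S_{n, (k-2)/2}^+$ is obtained from $S_{n, (k-2)/2}$ by adding a single edge between two non-universal vertices, creating $\binom{(k-2)/2}{t-2}$ new $t$-cliques and a strictly larger $\rho_t$. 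This yields $\bar G = S_{n, (k-2)/2}^+$.

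The main obstacle is making the asymptotic in the final step fully quantitative: the leading coefficient is strictly monotone in $s$, but the $o(1)$ error depends on $(s,l)$, so one must bound it uniformly over the (finitely many) competing configurations in order to produce a concrete threshold $n_0 = n_0(k,t)$ beyond which the strict ordering persists. A related subtlety is the boundary case $t = \lfloor(k+1)/2\rfloor$, where the only feasible $s$ is $t-1 = \lfloor(k-1)/2\rfloor$ and the monotonicity argument becomes vacuous; here the analysis reduces to the choice of $l$, which is handled directly by the edge-addition comparison above.
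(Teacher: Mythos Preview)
Your proposal is correct and tracks the paper closely through the first two steps: ruling out the complete case via the growth $\rho_t(S_{n,\lfloor(k-1)/2\rfloor})=\Theta(n^{(t-1)/t})$ against the constant $\rho_t(K_{k-1})$, and forcing $n'=n$ by enlarging the independent part, are exactly Claims~\ref{CC4} and~\ref{CC5} in the paper.

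The third step is where the two arguments diverge. The paper does not compute the leading asymptotic coefficient; instead it proves a single explicit inequality
\[
\rho_t(S_{n,a+1,b-2}) \;>\; \rho_t(S_{n,a,b})\qquad (b\ge 3,\ n\ \text{large}),
\]
by plugging the Perron vector of $S_{n,a,b}$ into $A_t(S_{n,a+1,b-2})$ and using the eigen\-equation to show $(n-a-b)\binom{a}{t-2}\,x_c/x_a$ dominates $\binom{a+b-2}{t-1}$. This ``slide'' keeps the circumference $2a+b-1$ fixed, so iterating lands at $b\in\{1,2\}$ with the largest admissible $a$, and the odd/even split falls out immediately. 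Your route---computing the leading term $f(s)n^{(t-1)/t}$ with $f(s)=\binom{s}{t-1}((t-1)/s)^{(t-1)/t}$ and using that $f$ is strictly increasing---is equally valid and arguably more transparent conceptually (your observation that the $K_l$ vertices collapse to the independent class at leading order is exactly why $f$ is independent of $l$). The cost, which you correctly flag, is that you must argue the $o(1)$ errors are uniform over the finitely many admissible pairs $(s,l)$ with $s\ge t-1$ and $2s+l-1<k$; this is routine but the paper's pairwise comparison sidesteps it entirely. Your tie-break for even $k$ via Lemma~\ref{2L4} (one extra edge, $\binom{(k-2)/2}{t-2}$ new $t$-cliques) is the right finishing move and matches the paper's implicit conclusion.
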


        \begin{proof}

            Suppose the graph $\bar{G} \in \mathrm{SPEX}_t'(n,C_{\geq k}) $, given the graph $G = S_{n',un(G),l} $ as defined above is a $t$-clique connected compoment of $\bar{G} $ that satisfies $\rho_t(G) = \rho_t(\bar{G})$.

            \begin{claim}\label{CC4}
                $G$ is a level-2 component.
            \end{claim}

            \begin{proof}
                If $G$ is not a level-2 component, then $G = K_{k-1} $. By Lemma~\ref{1L1}, using the method of Lagrange multipliers and incorporating the symmetry of graph $K_{k-1}$, we can calculate that
                    \begin{align*}
                        \rho_t(K_{k-1}) = \max\{A_t(K_{k-1})x^t \mid \sum_{i=1}^{n} x_i^t = 1\} = \frac{t}{k-1}\begin{pmatrix}k-1\\t\end{pmatrix},
                    \end{align*}

                It is easy to see that the graph $S_{n,\lfloor \frac{k-1}{2} \rfloor}$ is $C_{\geq k}$-free. When $t \leq \lfloor \frac{k+1}{2} \rfloor$, By incorporating the symmetry of the graph $S_{n,\lfloor \frac{k-1}{2} \rfloor}$, based on the characteristic equation and incorporating the symmetry, we can derive the following result.
                    \begin{align*}
                        \rho_t(S_{n,\lfloor \frac{k-1}{2} \rfloor}) = \Theta(n^{\frac{t-1}{t}}),
                    \end{align*}
                then, for $k > t \geq 2$, when $n$ is sufficiently large, we have
                    \begin{align*}
                        \rho_t(S_{n,\lfloor \frac{k-1}{2} \rfloor}) > \rho_t(K_{k-1}),
                    \end{align*}
                this contradicts the assumption that $G \in \mathrm{SPEX}_t'(n, C_{\geq k})$.
            \end{proof}

            \begin{claim}\label{CC5}
                $\bar{G} = G$.
            \end{claim}

            \begin{proof}
                If $\bar{G} \neq G$, then $n' \neq n$, meaning there exists a vertex $i \in V(\bar{G}) \setminus V(G)$. Remove all edges incident to vertex $i$, and connect $i$ to $j$, for every $j \in un(G)$, resulting in a new graph $G'$. Due to $n'-un(G)-l \geq un(G) $, we have $c(G') \leq c(G) \leq c(\bar{G})$, and by Lemma~\ref{2L4}, we have $\rho_t(G') > \rho_t(G) = \rho_t(\bar{G})$, which contradicts the assumption that $\bar{G} \in \mathrm{SPEX}_t'(n, C_{\geq k})$. That is $\bar{G} = G = S_{n,{un}(G),l}$.
            \end{proof}

             We now consider the sizes of $un(G)$ and $l$. In the graph $S_{n, a, b}$ (with $a \geq 1$, $b \geq 3$), let $A$, $B$, and $C$ be the vertex sets of $K_a$, $K_b$, and $I_{n-a-b}$ of $S_{n,a,b}$, respectively. Lemma~\ref{2L3} shows its $t$-clique tensor has a unique positive $t$-clique Perron vector $x $ satisfying, where by symmetry $x_i=x_j $ holds for any $i$ and $j$ that are in the same set among $A $, $B $ or $C $, therefore we can let $x_a $, $x_b $ and $x_c $ denote the $t$-clique Perron vector components for vertices in $A$, $B$ and $C$ respectively, from the characteristic equation, it is straightforward to see that $x_a > x_b > x_c$.
             Due to $n $ being sufficiently large and $b \geq 3 $, we immediately have $c(S_{n,a+1,b-2}) = c(S_{n,a,b}) $, the comparison of $t$-clique spectral radii between these two graphs follows
                \begin{align*}
                    &\rho_t(S_{n,a+1,b-2})-\rho_t(S_{n,a,b})
                    \\\geq & (n-a-b)\begin{pmatrix} a\\t-2 \end{pmatrix}x_a^{t-2}x_bx_c-\sum_{i=1}^{t-1}\begin{pmatrix} b-2\\i \end{pmatrix}\begin{pmatrix} a\\t-i-1 \end{pmatrix}x_a^{t-i-1}x_b^{i+1}
                    \\\geq & x_bx_a^{t-1}\left((n-a-b)\begin{pmatrix} a\\t-2 \end{pmatrix}\frac{x_c}{x_a}-\begin{pmatrix} a+b-2\\t-1 \end{pmatrix} \right).
                \end{align*}
            According to the characteristic equation, it can be inferred that
                \begin{align*}
                    \rho_t(S_{n,a,b})x_c^{t-1}=t\begin{pmatrix} a\\t-1 \end{pmatrix}x_a^{t-1}.
                \end{align*}
            Based on the characteristic equation and incorporating the symmetry, we can derive the following result.
                \begin{align*}
                    \rho_t(S_{n,a,b})=\Theta(n^{\frac{t-1}{t}}),
                \end{align*}
            then, there is
                \begin{align*}
                    \frac{x_c}{x_a}=\Theta(n^{-\frac{1}{t}}),
                \end{align*}
            so, when $n $ is sufficiently large
                \begin{align*}
                    (n-a-b)\begin{pmatrix} a\\t-2 \end{pmatrix}\frac{x_a}{x_c} > \begin{pmatrix} a+b-2\\t-1 \end{pmatrix},
                \end{align*}
            this means that
                \begin{align*}
                    &\rho_t(S_{n,a+1,b-2})-\rho_t(S_{n,a,b}) > 0.
                \end{align*}
            That is, for a given circumference $k$, under the constraint of $2a+b-1 < k$, when $k$ is odd, $t$-clique spectral radii of $S_{n,a,b}$ reaches its maximum at $a= \frac{k-1}{2} $ and $b=1$; when $k$ is even, it reaches maximum at $a= \frac{k-2}{2} $ and $b=2$.

        \end{proof}

        For sufficiently large $n$, $2 \leq t \leq \lfloor \frac{k+1}{2} \rfloor$, Lemma~\ref{CL5} establishes that for odd $k$ (resp. even $k$), the graph $S_{n, \frac{k-1}{2} }$ (resp. $S_{n, \frac{k-2}{2} }^+$) not only attains the maximum spectral radius in $\mathrm{SPEX}_t(n,C_{\geq k})$ but is further the unique extremal graph in $\mathrm{SPEX}_t'(n,C_{\geq k})$, namely $\mathrm{SPEX}_t'(n,C_{\geq k})=\{S_{n, \frac{k-1}{2} }\}$ (resp. $\{S_{n, \frac{k-2}{2} }^+\}$).We now prove the uniqueness of extremal graphs in $\mathrm{SPEX}_t(n,C_{\geq k})$.

        \begin{proof}[Proof of Theorem~\ref{CT1}]

            If $k$ is odd (resp. even), by the definition of $\mathrm{SPEX}_t'(n,C_{\geq k})$, if $\mathrm{SPEX}_t(n,C_{\geq k})\setminus\mathrm{SPEX}_t'(n,C_{\geq k})\neq \emptyset$, then there exists $G \in \mathrm{SPEX}_t(n,C_{\geq k})\setminus\mathrm{SPEX}_t'(n,C_{\geq k})$ such that $G$ contains an adjacent pair $u,v$ with $G_{u \to v} = S_{n, \frac{k-1}{2} }$ (resp. $S_{n, \frac{k-2}{2} }^+$).
            By definition of $G_{u \to v}$, we have $v \in {Un}(S_{n, \frac{k-1}{2} })$ (resp. $Un(S_{n, \frac{k-2}{2} }^+)$) and $u \notin {Un}(S_{n, \frac{k-1}{2} })$ (resp. $Un(S_{n, \frac{k-2}{2} }^+)$).
            Clearly $G$ remains $t$-clique connected. Since $u$ and $v$ have distinct $t$-clique Perron vector components in $S_{n, \frac{k-1}{2} }$ (resp. $S_{n, \frac{k-2}{2} }^+$) , the equality condition in formula~\ref{align1} fails, i.e.,
                \begin{align*}
                    \rho_t(S_{n, \frac{k-1}{2} }) > \rho_t(G)\ (resp. \rho_t(S_{n, \frac{k-2}{2} }^+)> \rho_t(G)).
                \end{align*}
            contradicting $G$'s extremality, then $\mathrm{SPEX}_t(n,C_{\geq k})\setminus \mathrm{SPEX}_t'(n,C_{\geq k}) = \emptyset$. This completes the proof.
        \end{proof}

\section{Proof of Theorems~\ref{CT3-1} and \ref{CT3}}

        Let $\mathrm{SPEX}_t(n,P_{k})$ denote the family of graphs with $n$ vertices that which does not contain $P_k $ and attain the maximum $t$-cliques spectral radius. Denote
                \begin{align*}
                    &\mathrm{SPEX}_t'(n,P_{ k})\\=&\{G|G \in \mathrm{SPEX}_t(n,P_{ k})\mathrm{\ and\ for\ any\ }\{u,v\} \in E(G),\ P_u(v)=\emptyset\ or\ P_v(u)=\emptyset\}.
                \end{align*}
         By Lemmas~\ref{L4-1} and \ref{CL1}, for any $G \in \mathrm{SPEX}_t(n, P_k)$, we can apply a sequence of operations to $G$ to obtain a graph $G'$ such that $G' \in \mathrm{SPEX}_t'(n, P_k)$, which implies that $\mathrm{SPEX}_t'(n, P_k)$ is nonempty. Similar to the proof in the third section, we can derive Theorems~\ref{CT3-1} and \ref{CT3} in parallel.

\vspace{3mm}

\noindent
\textbf{}
\vspace{3mm}
\noindent

\section*{}


\begin{thebibliography}{99}

\bibitem{Erdos1959} P. Erd\H{o}s, T. Gallai, On maximal paths and circuits of graphs. Acta Math. Acad. Sci. Hung., 1959, 10: 337-356.
\bibitem{Luo2018} R. Luo, The maximum number of cliques in graphs without long cycles. J. Combin. Theory Ser. B, 2018, 128: 219-226.
\bibitem{Zhao2024} X. Zhao, M. Lu, Generalized Tur\'{a}n problems for a matching and long cycles. 2024, arXiv:2412.18853.
\bibitem{Lu2025} Y. Lu, L. Kang, Y. Xue, On generalized Tur\'{a} n problems with bounded matching number and circumference. 2025, arXiv:2503.07386.
\bibitem{Fang2025} X. Fang, X. Zhu, Y. Chen, Generalized Tur\'{a}n problem for a path and a clique. European J. Combin., 2025, 127: 104137.
\bibitem{Gao2019} J. Gao, X. Hou, The spectral radius of graphs without long cycles. Linear Algebra Appl., 2019, 566: 17-33.
\bibitem{Nikiforov2010} V. Nikiforov, The spectral radius of graphs without paths and cycles of specified length. Linear Algebra Appl., 2010, 432: 2243-2256.
\bibitem{Zhai2015} M. Zhai, H. Lin, S. Gong, Spectral conditions for the existence of specified paths and cycles in graphs. Linear Algebra Appl., 2015, 471: 21-27.
\bibitem{Lin2021} H. Lin, H. Guo, A spectral condition for odd cycles in non-bipartite graphs. Linear Algebra Appl., 2021, 631: 83-93.
\bibitem{Zhai2021} M. Zhai, H. Lin, J. Shu, Spectral extrema of graphs with fixed size: Cycles and complete bipartite graphs. European J. Combin., 2021, 95: 103322.
\bibitem{Li2022} S. Li, W. Sun, Y. Yu, Adjacency eigenvalues of graphs without short odd cycles. Discrete Math., 2022, 345: 112633.
\bibitem{Zhang2023} Z. Zhang, Y. Zhao, A spectral condition for the existence of cycles with consecutive odd lengths in non-bipartite graphs. Discrete Math., 2023, 346: 113365.
\bibitem{Li2023} B. Li, B. Ning, Eigenvalues and cycles of consecutive lengths. J. Graph Theory, 2023, 103(3): 486-492.
\bibitem{Li2024} X. Li, M. Zhai, J. Shu, A Brualdi-Hoffman-Tur\'{a}n problem on cycles. European J. Combin., 2024, 120: 103966.
\bibitem{C2024} S. Cioab\u{a}, D. Desai, M. Tait, The spectral even cycle problem. Combin. Theory 2024, 4(1).
\bibitem{Zou2026} L. Zou, L. Feng, Y. Li, Spectral extremal problems for non-bipartite graphs without odd cycles. Discrete Math., 2026, 349: 114670.
\bibitem{Qi2005} L. Qi, Eigenvalues of a real supersymmetric tensor. J. Symb. Comput., 2005, 40: 1302-1324.
\bibitem{Lim2005} L. Lim, Singular values and eigenvalues of tensors: a variational approach. In: IEEE international workshop on computational advances in multi-sensor adaptive processing, 2005: 129-132.
\bibitem{Liu2023} C. Liu, C. Bu, On a generalization of the spectral Mantel's theorem. J Comb. Optim., 2023, 46: 14.
\bibitem{Liu2025} C. Liu, C. Bu, A tensor's spectral bound on the clique number. Discrete Math., 2026, 349: 114694.
\bibitem{Liu20231} C. Liu, J. Zhou, C. Bu, An Erd\H{o}s-Stone type result for high-order spectra of graphs. 2023, arXiv:2311.13368.
\bibitem{Liu2024} C. Liu, J. Zhou, C. Bu, The high order spectral extremal results for graphs and their applications. Discrete Appl. Math., 2024, 357: 209-214.
\bibitem{Yu2026} L. Yu, Y. Peng, A spectral generalized Erd\H{o}s-Gallai theorem. Discrete Math., 2026, 349: 114672.
\bibitem{Yan2026} Z. Yan, B. Yang, Y. Peng, A spectral generalized Alon-Frankl theorem. Discrete Math., 2026, 349: 114785.
\bibitem{Qi2013} L. Qi, Symmetric nonnegative tensors and copositive tensors. Linear Algebra Appl., 2013, 439: 228-238.
\bibitem{Friedland2013}	S. Friedland, S. Gaubert, L. Han, Perron-Frobenius theorem for nonnegative multilinear forms and extensions. Linear Algebra Appl., 2013, 438: 738-749.
\bibitem{Shao2013} J. Shao, H. Shan, L. Zhang, On some properties of the determinants of tensors. Linear Algebra Appl., 2013, 439: 3057-3069.
\bibitem{Khan2015} M. Khan, Y. Fan, On the spectral radius of a class of non-odd-bipartite even uniform hypergraphs. Linear Algebra Appl.,2015, 480: 93-106.
\bibitem{Ai2024} J. Ai, H. Lei, B. Ning, Y. Shi, Graph operations and a unified method for kinds of Tur\'{a}n-type problems on paths, cycles and matchings. 2024, arXiv:2312.08226v2.

\end{thebibliography}
\end{document}